\newcommand{\R}{\mathbb{R}}
\newcommand{\N}{\mathbb{N}}
\newcommand{\Z}{\mathbb{Z}}
\newcommand{\kk}[1]{{_q}{#1}}
\newcommand{\pr}{\mathbf{P}}
\newcommand{\exn}{\mathbf{E}}
\newcommand{\bz}{\mbox{\boldmath$z$}}
\newcommand{\by}{\mbox{\boldmath$y$}}
\newcommand{\bx}{\mbox{\boldmath$x$}}
\newcommand{\bw}{\mbox{\boldmath$w$}}
\newcommand{\bu}{\mbox{\boldmath$u$}}
\newcommand{\bv}{\mbox{\boldmath$v$}}
\newcommand{\bm}{\mbox{\boldmath$m$}}
\newcommand{\bn}{\mbox{\boldmath$n$}}
\newcommand{\bo}{\mbox{\boldmath$0$}}
\newcommand{\bzeta}{\mbox{\boldmath$\zeta$}}
\newcommand{\bxi}{\mbox{\boldmath$\xi$}}
\newcommand{\balpha}{\mbox{\boldmath$\alpha$}}
\newcommand{\sbz}{\mbox{\scriptsize\boldmath$z$}}
\newcommand{\tbz}{\mbox{\tiny\boldmath$z$}}
\newcommand{\sbx}{\mbox{\scriptsize\boldmath$x$}}
\newcommand{\sbv}{\mbox{\scriptsize\boldmath$v$}}
\newcommand{\sbm}{\mbox{\scriptsize\boldmath$m$}}
\newcommand{\sbn}{\mbox{\scriptsize\boldmath$n$}}
\newcommand{\sbal}{\mbox{\scriptsize\boldmath$\alpha$}}
\newcommand{\sbzeta}{\mbox{\scriptsize\boldmath$\zeta$}}
\newtheorem{thm}{Theorem}
\newtheorem{exam}{Example}
\newtheorem{rema}{Remark}
\begin{document}

\title{On stationary distributions of stochastic neural networks}

\author{K.~Borovkov,\footnote{Department of Mathematics and Statistics, The University of Melbourne, Parkville 3010, Australia; e-mail: {borovkov@unimelb.edu.au}.}
 \
 G.~Decrouez,\footnote{Department of Mathematics and Statistics, The University of Melbourne, Parkville 3010, Australia; e-mail:
 {dgg@unimelb.edu.au}.}
 \
 M.~Gilson\footnote{Department of Electrical and Electronic Engineering, The University of Melbourne, Parkville, Australia. Now at: Laboratory  for Neural Circuit Theory, RIKEN Brain Science Institute, Hirosawa 2-1, Wako-shi, Saitama 351-0198, Japan; e-mail: gilson@brain.riken.jp.}
 }

\date{}

\maketitle

\renewcommand{\thefootnote}{}
\footnotetext{Research supported by the ARC Discovery Grant DP120102398 and the ARC Centre of Excellence for Mathematics and Statistics of Complex Systems.}

\begin{abstract}
The paper deals with non-linear Poisson neuron network models with bounded memory dynamics, that can include both Hebbian learning mechanisms and refractory periods.  The state of a network is described by  the times elapsed since its neurons fired within the post-synaptic transfer kernel memory span, and the current strengths of synaptic connections, the state spaces of our models being hierarchies of finite-dimensional components. We establish ergodicity of the stochastic processes describing the behaviour of the networks and prove the existence of continuously differentiable stationary distribution densities (with respect to the Lebesgue measures of corresponding dimensionality) on the components of the state space and find upper bounds for them. For the density components, we derive a system  of differential equations that can be solved in a few simplest cases only. Approaches to approximate computation of the stationary density are discussed. One is to reduce the dimensionality of the problem by modifying the network so that each neuron cannot fire if the number of spikes it emitted within the post-synaptic transfer kernel memory span reaches a given threshold. We show that the stationary distribution of this `truncated' network converges to that of the unrestricted one as the threshold increases, and that the convergence is at a super-exponential rate. A complementary approach uses discrete Markov chain approximations to the network process. We derive linear systems for the stationary distributions of these Markov chains and prove that these distributions  converge weakly to the stationary laws for the original processes.
\end{abstract}

\small{\textit{\hspace{.2cm} Keywords:} Neural networks, non-linear Poisson neuron, Markov process, ergodicity, stationary distribution.}

\maketitle

\section{Introduction}

Neurons are  electrically excitable cells whose main function is to process and transmit  information. They connect to each other to form neural networks that constitute core components of the nervous system, and so building and studying mathematical models of such networks is of key interest. To justify the modelling approach used in this paper (as described in detail in Section~\ref{secMod}), we will briefly describe
the mechanism enabling neurons to communicate with each other.

The anatomy of a neuron involves three distinct parts with different electrical activity functions: dendrites that form a tree and contain post-synaptic
receptors (inputs), the cell body (soma) that integrates the input currents coming from the dendrites, and a long-limbed axon that terminates with pre-synaptic buttons (outputs). A typical feature of the neuronal electrical activity is the propagation of membrane depolarisation. The membrane of a resting neuron is polarised. Brief high-amplitude depolarisations that propagate from the soma along the axon are called action potentials (or spikes) and have a characteristic shape. When a spike reaches an axonal termination that ``connects'' to a post-synaptic neuron, neurotransmitters are released into the extra cellular space and
excite receptors on the post-synaptic neuron (usually on dendrites). This generates a local variation of the membrane potential in that neuron, which propagates towards the soma. The soma can be seen as a spatio-temporal integrator of these post-synaptic potentials (PSP) to generate an output spike. The soma potential often remains close to the resting value for a
few milliseconds after firing an action potential, which is referred to as the
refractory period (through which the neuron cannot fire again). These basic elements of the neuronal information processing actually depend upon many different mechanisms at the molecular level, such as ionic
concentrations, density of ion channels, axonal myelination, and types of
neurotransmitters (for a review, we refer the reader to~\cite{BeaEta07}).

There exists extensive literature on mathematical modelling of both individual neurons and neural networks. We refer the reader interested in neurophysiological principles of neuron and brain operation to~\cite{BeaEta07, NiEtal11} and  more advanced expositions in~\cite{She03, SheGri10} of the circuitry of the brain. A detailed (but accessible and rather non-technical) discussion of brain networks, covering structural, functional, and effective connectivity and their respective dynamics, is presented in~\cite{Spo10} (the book also contains extensive bibliography of the relevant research work in the complex network theory). A detailed overview of the computational modelling of nervous systems from the molecular and cellular level, including mathematical modelling of adaptation and learning, is given in~\cite{DavAbb05}. Monograph~\cite{Izh10} is a systematic study of the relationship of electrophysiology, nonlinear dynamics, and computational properties of neurons. One can also mention here~\cite{Koch, GerKis02} and refer to~\cite{Gilson09} for a recent review of the literature in the area.

As the duration of an action potential is relatively short (usually less than 1~ms for sodium-based action potentials), for modelling purposes spikes
are often considered to be instantaneous. Hence a neuron can be modelled using a point process whose intensity depends on the past activity of the neuron, its incoming synaptic stimulation or other mechanisms. The use of point processes, such as Hawkes processes, for modelling the spiking activity of neurons dates back to papers~\cite{Bril75, Bril88, Chor88} and made it possible to study analytically the neuronal response to various input stimulations (for a review, see~\cite{GerKis02}). Applications of particular neural non-linear point process models to real data can be found in~\cite{Panin04, Pill08, Pill11, Stev09, Truc05}.

In Section~\ref{secMod} we present descriptions of two network models we are dealing with in this paper. Both are ``assembled" of non-linear Poisson neurons that can be viewed as extended versions of self-exciting Hawkes point processes, the difference between the two models being that the former has constant strengths of synaptic connections between neurons, whereas in the latter, to model Hebbian learning, we allow the strengths to change depending on the order in which the connected neurons are firing. Instead of using the formalism of point processes (as e.g.\ in~\cite{BremMass96, BremM02}), we choose an alternative description in terms of multivariate Markov processes whose states represent networks' spiking histories, with state spaces being products of hierarchies of simplices. This approach proves to be rather convenient and allows one to demonstrate ergodicity of the network processes under rather general conditions (Section~\ref{secErg}) and, moreover, to study the stationary distributions thereof. We show  that the stationary distribution of the network has a smooth density with respect to a natural measure on the state space and give upper bounds for the components of that density on different components of the state space of the process.

In Section~\ref{appra} we discuss a way to reduce the dimensionality of the model and approximate its stationary distribution with more tractable objects. The approach is based on ``truncating" the original process by ``forbidding" neurons to fire once they have fired a given number $n$  of spikes recently (within the ``memory window" of the neuron). Thus modified process will still be Markovian and ergodic, its stationary distribution confined to a space of lower dimensionality and approximating that of the original process at a super-exponential rate in~$n$. In fact, such dynamics do make physical sense when the existence of the refractory period is taken into account, but one can further simplify the model by choosing an even lower threshold~$n$. 
 
Section~\ref{secComp} deals with the problem of computing the stationary distributions of the networks. We derive systems of differential equations for the stationary distributions (unfortunately, they seem to be tractable in the simplest cases only, that are discussed as examples). Moreover, we prove that the stationary distribution of our network process can be approximated by those of discrete Markov chains constructed as discretised (in both time and space) versions of the process. Computing the stationary distributions for the chains is more feasible, as it only requires solving systems of (a large number of) linear algebraic equations.

\section{Network dynamics and its description by Markov processes}
\label{secMod}

We consider a model neural network consisting of $N$ neurons and $M$ external sources. Both external  sources and neurons can fire spikes, which are assumed to be generated by a random mechanism. External sources are assumed to fire according to independent Poisson processes with constant rates $\hat \rho_k$, $k\in \{1,2,\ldots, M\}$ (all the quantities related to external sources will be labelled with hats, and all the processes in the paper will be assumed to be right-continuous), whereas a neuron's instantaneous firing rate  is determined by the value of the activation function of the so-called synaptic influx. For neuron $i$, the latter is the sum of all  PSPs generated by spikes arriving to the  $i$th neuron's synapses from external sources and other neurons, and also background activity.

More precisely, assuming that $\{\widehat{T}_{k,n}\}_{n\in\Z}$ are times at which external source $k\in\{1,\ldots, M\}$ fired, $\{T_{j,n}\}_{n\in\Z}$ are times at which neuron $j\in\{1,\ldots, N\}$ fired, and $\hat\epsilon_{ik}(t)$ and $\epsilon_{ij}(t)$ are post-synaptic response kernel functions describing the effects on neuron $i$ potential from accepting spikes through synapses connecting source $k$ to neuron $i$ and neuron $j$ to neuron~$i$, respectively, the total time $t$ synaptic influx for neuron~$i$ is given by
\[
J_i (t):= v_i +  \sum_{k, m}\widehat{W}_{ik}(\widehat{T}_{k,m})   \hat{\epsilon}_{ik}(t-\widehat{T}_{k,m})
 +\sum_{j, n}W_{ij}(T_{j,n})  \epsilon_{ij}(t-T_{j,n}) ,
\]
where $v_i=\rm{const}$ represents the background activity for neuron~$i$,
the synaptic weights $\widehat{W}_{ik}(t)$ and $W_{ij}(t)$ can be positive (excitatory synapse)
or negative (inhibitory synapse) and, to reflect brain plasticity  (e.g.\ to model Hebbian learning), they can depend on time as well. If, at time $t$, there is no synaptic connection of external source~$k$ to neuron~$i$, we simply have $\widehat{W}_{ik}(t)=0$, and likewise for network neurons' connections.

The kernels $\hat\epsilon_{ik}(t)\ge 0$ and $\epsilon_{ij}(t)\ge 0$ are assumed to vanish outside a compact interval: for any  $k\le M$ and $i,j\le N,$
\[
\hat\epsilon_{ik}(t)=\epsilon_{ij}(t)=0\quad\mbox{for}\quad  t\not\in [0,\Theta],
 \quad \Theta = \mbox{const}>0,
\]
which ensures causality and also means that the {\em direct\/} effect of  any given spike on a neuron completely disappears within a finite time $\Theta$ (for real life neurons, the order of magnitude of $\Theta$ is $10^2$~ms).

In a previous series of papers by one of the authors (\cite{Gilson09c, Gilson10}), the case where all kernels were identical to some function $\epsilon$, but incorporating individual synaptic delays $\hat{d}_{ik}$ and $d_{ij}$:
\[
\hat{\epsilon}_{ik}(t)=\epsilon(t-\hat{d}_{ik})\ \mbox{ and } \ \epsilon_{ij}(t)=\epsilon(t-d_{ij})
\]
was considered. The delays account for both the axonal propagation of action potential up to the synaptic site, and for the diffusion time of the neurotransmitters in the synaptic
cleft. In the present paper, we allow each synapse to have individual properties.

The effect of the synaptic influx on the behaviour of  neuron $i$ is expressed via an activation function $\varsigma_i (\cdot)$, which is assumed to be continuous non-decreasing (and usually ``S-shaped"), with
\[
0<\underline{\varsigma}_i \le \varsigma_i (x) \le \overline{\varsigma}_i
 \le \overline{\varsigma}:=\max_j \overline{\varsigma}_j <\infty, \quad x\in\R, \ i\le N.
\]
Namely, denoting by $T_i (t)$ the time of the last spike fired by neuron $i$ prior to time $t$, by $\mathscr{F}_t$ the $\sigma$-algebra generated by the evolution of our system up to time $t$ and setting $\Delta (t):= (t, t+\Delta)$ for $\Delta>0$, we have, as $ \Delta \to 0,$
\begin{equation}
\pr \bigl( \mbox{neuron $i$ fires during } \Delta (t)\, |\, \mathscr{F}_t\bigr)
 = \varsigma_i (J_i(t))  r (t- T_i (t))\Delta + o (\Delta),
 \label{prs}
\end{equation}
where we used a left-continuous function $r(\cdot)\in [0,1]$ to model  the existence of the so-called
absolute refractory period, i.e.\ the time period during which a just fired neuron is  unable to fire again. One can take  e.g.\
\begin{equation}
r(s):=\mathbf{1} (s\not\in (0,\delta_{AR}]), \quad \delta_{AR}=\mbox{const}>0,
 \label{refr}
\end{equation}
the indicator function of the complement of the interval $(0,\delta_{AR}]$ (for real life neurons, $\delta_{AR}$ is about 1~ms). Whatever the shape of $r$, we always assume that $r(s)=1$ for $s\ge \Theta$.

In addition to~\eqref{prs}, we assume that
\[
\pr \bigl( \mbox{more than one neuron fires during } \Delta (t)\, |\, \mathscr{F}_t\bigr)
 =   o (\Delta),
\]
which basically means that, given the past history $\mathscr{F}_t$, the instantaneous firing of different neurons is driven by independent random mechanisms.

Note that the widely studied classical Hawkes process (see \cite{Hawkes, Brem81, BremM01, BremM02, KGvH}) corresponds to the identity activation function $\varsigma_i$ in~\eqref{prs}, and that the positivity of $\varsigma_i$ means that  neurons can fire  spikes in the absence of any external stimulation.  The use of  non-linear  bounded functions $\varsigma_i$ is motivated by the experimentally observed saturation of the neuronal firing rate when
its excitation increase. Observe also that the temporal spread of the synaptic responses (modelled by $\epsilon_{ij}$) induces specific temporal correlations between the neuronal spike trains, which can be evaluated for the
case of linear activation functions $\varsigma_i$~\cite{Hawkes, Gilson09d, Gilson10}.

We will consider two types of models that differ in their assumptions concerning synaptic weights:

\smallskip

{\bf Model~I} assumes that all the synaptic weights are constant: $\widehat{W}_{ik}(t) \equiv \widehat{W}_{ik}=\mbox{const}$ and $W_{ij}(t)\equiv W_{ij}=\mbox{const}$ for any  $k\le M$ and $i,j\le N$.

 \smallskip

{\bf Model~II} assumes a Hebbian learning mechanism in the form of spike-timing
dependent plasticity (STDP) : if, within a short enough time interval, there are spikes
at both pre-synaptic and post-synaptic sides of a connection, this can change the
weight of the connection. The weight increases if the post-synaptic spike follows the
pre-synaptic one (reinforcement of the synapse), and decreases otherwise (depression of
the synapse).

For simplicity we assume that, for each of the connections, the synaptic weight can assume finitely many values: for a common finite $L$,
\begin{align*}
\widehat{W}_{ik}(t) & \in \widehat{G}_{ik}
  := \{\hat g_{ik} (1) \le \hat g_{ik} (2)\le \cdots  \le  \hat g_{ik} (L)\},
  \\
W_{ij}(t) & \in {G}_{ij}
 := \{  g_{ij} (1) \le   g_{ij} (2) \le\cdots \le   g_{ij} (L)\},
\end{align*}
and the following discrete approximation of the STDP mechanisms discussed e.g.\
in~\cite{Buetal07}.  For any $i,j\in\{1,\ldots, N\}$ and $m\in\{1,\ldots, L\}$, we have
a collection of points
\begin{align*}
- \infty & <
 u_{ij} (m,m+1) <  u_{ij} (m,m+2)   < \cdots < u_{ij} (m,L+1)=0\\
  &=  u_{ij} (m,1)
      <  u_{ij} (m,2)  < \cdots < u_{ij} (m,m) <\infty
\end{align*}
In real life situations, the length $\delta_{LW} :=\max\{u_{ij} (m,m), | u_{ij}
(m,m+1)|\}$  of the ``learning window" is about $10^2$~ms. We assume that $\delta_{LW}
<\Theta$.

Now suppose that, for a given time $t$, one has $W_{ij}(t-)= g_{ij}(m) $ and either
$t=T_i(t)$ or $t=T_j(t)$ (i.e.\ one of the neurons $i,j$  fired at time~$t$). Then we
put
\[
W_{ij}(t):= g_{ij}(d)\quad \mbox{if} \quad \left\{
 \begin{array}{l}
 t=T_i(t)\ \mbox{ and }\ T_j (t) -t \in (u_{ij} (m,d), u_{ij} (m,d+1)], \\
 t=T_j(t)\ \mbox{ and }\ t -T_i (t)  \in (u_{ij} (m,d), u_{ij} (m,d+1)],
 \end{array}
 \right.
\]
$d\in\{1,\ldots, L\}$, while otherwise the value of the weight remains unchanged. A
similar rule applies to the weights~$\widehat W_{ik}$.

Note that the above mechanism  allows one to   model the emergence of new synaptic connections as well. Altogether, our network models provide a certain degree of biologically realism together with a mathematical framework that allows a tractable analysis.

\smallskip

Having described the rules governing of our neural network, we will  now present a Markov process model for it. Observe that, at time $t$, the knowledge of all the current synaptic weights and the times of all the spikes fired in the network within the time interval $(t-\Theta,t]$ is all the information from the past and present that one needs to uniquely specify the probability distribution of the future evolution of the system.

Therefore, to obtain a Markovian description of the network, we denote by
$\hat{\nu}_k (t)$ the number of spikes fired by  external source $k$ in the time window $(t-\Theta,t]$, $k\le M$. If $\hat{\nu}_k (t)=0$, then we say that source $k$ was at the state
 \[
\widehat{X}_{k}(t)\equiv ( \widehat{X}_{k,1}(t),  \widehat{X}_{k,2}(t),  \widehat{X}_{k,3}(t),\ldots) = (0,0,0,\ldots)\in \R_+^{\N}
 \]
at time~$t$. If $\hat{\nu}_k (t)\ge 1$, we set
$
\widehat{X}_{k,1}(t):= \widehat{T}_k (t) -t+\Theta\in(0,\Theta],
$
which is the time till the  last spike fired by $k$ prior to the ``present" time $t$ disappears from the moving  window $(s-\Theta,s]$, $s\ge t,$ and then  we denote by
$
\widehat{X}_{k,2}(t):= \widehat{T}_k (\widehat{T}_k (t)-) -t+\Theta\in(0,\Theta],
$
the time till the second last spike fired by $k$ prior to time $t$ disappears from the moving  window $(s-\Theta,s]$,
and so on, so that in this case we always have
\[
\widehat{X}_{k}(t)=  ( \widehat{X}_{k,1}(t), \widehat{X}_{k,2}(t), \ldots ,\widehat{X}_{k,\hat{\nu}_k (t)}(t),0,0,\ldots),
 \quad k=1,\ldots, M,
\]
with $\widehat{X}_{k,1}(t) >\widehat{X}_{k,2}(t) >\cdots > \widehat{X}_{k,\hat{\nu}_k (t)}(t)>0$ a.s.\ (as having two spikes at exactly the same time is a zero probability event).

Likewise, the state of neuron $i$  is described by the vector
\[
 {X}_{i}(t)=  (  {X}_{i,1}(t),  {X}_{i,2}(t)\ldots , {X}_{i,  \nu_i (t)}(t),0,0,\ldots),
 \quad i=1,\ldots, N,
\]
with $ {X}_{i,1}(t) > {X}_{i,2}(t) >\cdots >  {X}_{i, {\nu}_i (t)}(t)>0$ a.s., ${\nu}_i (t)$ being the number of spikes fired by $i$ during  $(t-\Theta,t]$. Now the complete history of spikes within the time window $(t-\Theta,t]$ is described by the vector
\[
Z(t):=(\widehat{X}(t); X(t)) := (\widehat{X}_{1}(t),\widehat{X}_{2}(t),  \ldots ,\widehat{X}_{M}(t) ; {X}_{1}(t), X_2 (t), \ldots, X_N (t)).
\]

For Model~I, this vector will completely specify the state of the network. The state space for the process $Z$ will be taken to be
\[
S:= E^{M+N}, \quad\mbox{where}\quad  E:=\bigcup_{n\ge 0} E^{(n)},
 \quad
\]
is the union of simplices
\[
  E^{(n)}:= \bigl\{(x_1, x_2, \ldots )\in \R^{\N}_+ : \Theta \ge x_1 > x_2 >\cdots> x_n>0;\ x_{n+m}=0, \ m>0\bigr\} ,
\]
$n=0,1,2,\ldots$ Note that the $n$-dimensional simplex $E^{(n)}$ is a face of the $(n+1)$-dimensional one, $E^{(n+1)},$ $n\ge 0$. We will endow $S$ with the product  $\sigma$-algebra $\mathscr{S}:= \mathscr{C}^{\otimes(M+N)} $, where $\mathscr{C}$ is the trace of the cylindric $\sigma$-algebra on the space $\R^\N$
on~$E$.

For Model~II, we need to specify in addition the state of the synaptic connections. This can be done by using the matrices
\[
\widehat{W} (t) = (\widehat{W}_{ik}(t))_{i\le N, k\le N}, \quad  {W} (t) = (W_{ij}(t))_{i,j\le N},
\]
in which to non-existent connections there will correspond zero entries.
The new process $ Z^* := (\widehat{X}; X; \widehat{W}; W) $ will have the state space
\[
S^* := S\times \biggl(\prod_{i\le N, k\le M} \widehat{{G}}_{ik}\biggr)\times
 \biggl(\prod_{i,j\le N } {G}_{ij}\biggr),
\]
endowed with the natural product $\sigma$-algebra that we will denote by~$\mathscr{S}^*$.

\smallskip

{\bf Model I dynamics.} Suppose we are given an initial condition $Z(0)\in S.$
Following the earlier description of the dynamics of our network, in the case of Model~I the process $Z$ is a piece-wise deterministic (linear) Markov process, which evolves for $t>0$ as follows.

\smallskip

(i) Inside time intervals free of jumps, one has, for any $k\le M,$ $i\le N$, $n>1,$
\begin{equation}
  \frac{d\widehat{X}_{k,n}(t)}{dt} = - \mathbf{1} (\widehat{X}_{k,n}(t)>0),
  \quad
  \frac{dX_{i,n}(t)}{dt} = - \mathbf{1} (X_{i,n}(t)>0).
 \label{derX}
\end{equation}
This means that all the non-zero components of the process decay at the unit rate, and when the ``first visible in the window" spike of, say, neuron $i$ that occurred at time $T_{i,n}$ ``disappears" from the moving time window $(t-\Theta, t]$ at time $t'=T_{i,n}+\Theta$, the number of positive components of $X_i$ drops by one: $\nu_t(t')=\nu_t(t'-)-1.$

(ii) Given the state of the process is $\bz =(\hat\bx; \bx)\in S,$ where $\hat\bx =(\hat\bx_1, \ldots, \hat\bx_M)$ has components $\hat\bx_k = (\hat x_{k,1}, \ldots, \hat x_{k,m_k}, 0,0,\ldots )\in E^{(m_k)}$ with $m_k\ge 0,$ $k\le M,$ and likewise $ \bx =( \bx_1, \ldots, \bx_N)$ has   $ \bx_i = (  x_{i,1}, \ldots,   x_{i,n_i}, 0,0,\ldots )\in E^{(n_i)}$, $i\le N,$ the instantaneous firing rate for source $k$ is $\hat \rho_k$, and for neuron $i$ it is  given by
\begin{align}
R_i (\bz) & :=
\varsigma_i \biggl(   v_i +  \sum_{k\le M} \widehat{W}_{ik} \sum_{m\ge 1}     \hat{\epsilon}_{ik}(\Theta-\hat{x}_{k,m}) \notag
 \\
& \hphantom{ := \varsigma_i \biggl(   v_i}\
 +\sum_{j\le N} W_{ij} \sum_{n\ge 1}  \epsilon_{ij}(\Theta -x_{j,n})\biggr) r (\Theta -x_{i,1}).
  \label{Ri}
\end{align}

(iii) When source $k$ fires (say, at time $t'=\widehat{T}_{k,m}$), the only change in the state of $Z$ is in the component $\widehat{X}_k:$
\[
\hat \nu_{k} (t')  = \hat \nu_{k} (t'-)+1
\]
a.s.\ (as it is impossible to  simultaneously ``lose" a spike in the time window and acquire a new one), and the new values of the components are:
\begin{align*}
\widehat{X}_{k,1}(t' ) &=\Theta,
 \\
\widehat{X}_{k,2}(t')&=\widehat{X}_{k,1}(t'- ),
 \\
\widehat{X}_{k,3}(t' )&=\widehat{X}_{k,2}(t'- ),
 \\
&\cdots
 \\
\widehat{X}_{k, \nu_{k} (t')}(t' ) &=\widehat{X}_{k, \nu_{k}(t')-1}(t'- ).
\end{align*}
Likewise, a spike fired by neuron $i$ will mean similar changes in the component $X_i.$

\smallskip

It is quite straightforward to write down the generator of the process $Z$, following the above description. The vector field specifying the dynamics of the process between jumps is piece-wise linear, and it changes its direction when, for one of the components $\widehat{X}_k\in E$ or $X_i\in E$, the respective integral curve running inside $E^{(n)}$, $n>1$, hits the face $E^{(n-1)}$ of that simplex and then continues inside that lower dimensional simplex. The domain of the generator will consist of all bounded functions $S\mapsto \R$ that are path-continuous and differentiable for that vector field (cf.~\cite{Jacobsen06}).

\smallskip

{\bf Model II dynamics.}
The trajectories of $Z^*$ will also be piece-wise deterministic (linear), with its first two components following~\eqref{derX} and the last two remaining unchanged between successive jumps. Jumps occur at the times when either external sources or neurons fire spikes, and, given the current state of the process is $\bz^*=(\hat{\bx}; \bx; \hat\bw; \bw)$ with $ \hat\bw=(\hat w_{ik})$ and $\bw=(  w_{ij} )$, the instantaneous firing intensities are $\hat\rho_k$ for source $k$ and, instead of~\eqref{Ri},
\begin{align}
R_i^* (\bz^*) & :=
\varsigma_i \biggl(   v_i +  \sum_{k\le M} \hat{w}_{ik} \sum_{m\ge 1}     \hat{\epsilon}_{ik}(\Theta-\hat{x}_{k,m}) \notag
 \\
& \hphantom{ := \varsigma_i \biggl(   v_i}\
 +\sum_{j\le N} w_{ij} \sum_{n\ge 1}  \epsilon_{ij}(\Theta -x_{j,n})\biggr) r (\Theta -x_{i,1}).
  \label{Ri*}
\end{align}
for neuron $i$.

When a spike is fired, the change in the components $\widehat{X}$ and $X$ is exactly the same as for Model~I (see part~(iii) of the description of its dynamics above), whereas the synaptic weight $\widehat{W}_{ik}$ can change when the spike was fired either by source~$k$ or by neuron~$i$. As $\delta_{LW}<\Theta$, the state of $Z^*$ just prior to the spike completely specifies to what value the synaptic weight should change, according to the learning rules listed in the description of Model~II. Similarly for the weights ${W}_{ij}$ that  can change when the spike is fired by either of the neurons~$i$ and~$j$. Thus we see that, in the case of Model~II, $Z^*$ is a well-defined Markov continuous time process completely describing the dynamics of the system. Its generator will differ from the one for $Z$ by the presence of terms related to jumps in the synaptic weights' values.

\section{Ergodicity and the properties of stationary distributions}
\label{secErg}

In this section, we establish strong ergodicity of the Markov process $Z^*$ describing the dynamics of Model~II. As Model~I is a special case of the latter, this means that the process $Z$ is also ergodic. The latter fact is actually an immediate consequence of Theorem~5 in~\cite{BremMass96} on stability of multivariate point processes with bounded memory dynamics (see also Theorem~6 in~\cite{BremMass96} for stability of a nonlinear multivariate Hawkes process with PSP transfer kernels having unbounded supports, and~\cite{Mass98}). However, even in the case of Model~I, our Markov process framework allows us to come up with much shorter and simpler proof of stability.

\begin{thm}
 \label{Thm_erg}
Under the stated assumptions for Model~II, the process $Z^*$ is strongly ergodic: it has a unique stationary   distribution $\pi$ on $(S^*,\mathscr{S}^*)$ such that
\begin{equation}
 \label{tot_var}
 \sup_{\sbz^*\in S^*} \sup_{B\in \mathscr{S}^*}\bigl|\pr (Z^*(t)\in B\, |\, Z^*(0)=\bz^*)-\pi(B) \bigr|\to 0\ \textrm{ as } \ t\to\infty.
\end{equation}
Moreover, the convergence is exponentially fast.
\end{thm}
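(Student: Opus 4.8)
The plan is to establish a Doeblin (uniform minorization) condition for the transition semigroup of $Z^*$ and then quote the classical theory of uniformly ergodic Markov processes. I would single out the reference state
\begin{equation*}
\bz_0^*:=(\bo;\bo;\bw^*)\in S^*,
\end{equation*}
in which both history components are empty and the synaptic weights $\bw^*$ form a fixed reference configuration specified below, and show that there exist $t_0>0$ and $\beta>0$ with
\begin{equation*}
\pr\bigl(Z^*(t_0)=\bz_0^*\mid Z^*(0)=\bz^*\bigr)\ge\beta\qquad\text{for all }\bz^*\in S^*.
\end{equation*}
Thus $\bz_0^*$ is an atom reachable from every starting point in a fixed time with a uniformly positive probability; the standard Harris/Doeblin argument then yields a unique stationary law $\pi$ and the geometric bound $\sup_{\sbz^*}\sup_{B}|\pr(Z^*(nt_0)\in B\mid Z^*(0)=\bz^*)-\pi(B)|\le(1-\beta)^n$, which upon interpolation over $t\in[nt_0,(n+1)t_0)$ is exactly the exponential version of~\eqref{tot_var}. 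All the substance lies in constructing the event above.

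I would assemble that event from three consecutive phases. The total jump intensity of $Z^*$ never exceeds $\Lambda:=\sum_{k\le M}\hat\rho_k+N\overline\varsigma$, since $\varsigma_i\le\overline\varsigma$ and $r\le1$; hence, uniformly in the current state, the probability of \emph{no} firing on a time interval of length $\Theta$ is at least $e^{-\Lambda\Theta}$. On such a no-firing interval every positive history coordinate decreases at unit rate by~\eqref{derX} and, being at most $\Theta$, reaches the origin by the end of the interval, so the history empties out deterministically while the weights stay frozen. I use this twice: phase one empties the history on $[0,\Theta]$, leaving the (arbitrary) initial weights; phase three again empties the history on a final interval of length $\Theta$, leaving the weights untouched. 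It therefore suffices to interpose, starting from the empty history, a short middle phase whose spike pattern carries \emph{any} weight configuration into one fixed configuration $\bw^*$.

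The core of the proof — and the step I expect to be the main obstacle — is this middle phase. The enabling observation is that a single correctly timed spike pins a weight to an extreme value regardless of its present value. If neuron $i$ fires a short time $\tau>0$ after the last spike of neuron $j$, the gap entering the learning rule is $T_j(t)-t=-\tau$; for $\tau<\min_{i,j,\,m<L}|u_{ij}(m,L)|$ the ordering of the points $u_{ij}(m,\cdot)$ forces $-\tau\in(u_{ij}(m,L),0]$ when the current value is any $m<L$ (so the weight jumps to $g_{ij}(L)$) and to lie in no admissible interval when the value is already $L$ (so it is left at $g_{ij}(L)$); either way $W_{ij}$ ends at its maximum. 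Symmetrically, a pre-synaptic spike following the post-synaptic one by a short time sends $W_{ij}$ to its minimum $g_{ij}(1)$ from any starting value. Starting from the empty history reached after phase one, I would therefore fire every external source and every neuron \emph{exactly once}, in a prescribed order and with consecutive gaps $\delta$ small enough that every realised pre--post (or post--pre) delay stays below the thresholds above. Because each unit fires only once, it does so while its own last-spike coordinate is still $0$, so $r(\Theta)=1$ and no refractory obstruction arises; and the final value of each weight is then dictated solely by whether its pre-synaptic end fired before or after its post-synaptic end, hence is the \emph{same} fixed configuration $\bw^*$ whatever the initial weights were. Since the firing rates are bounded below (neurons by $\underline\varsigma_i>0$ at these instants, sources by $\hat\rho_k>0$) and above by $\Lambda$, the probability of realising precisely this finite pattern — the prescribed spikes in prescribed short windows and no others — is bounded below by a positive constant, uniformly over the finitely many initial weight configurations.

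The delicate bookkeeping I would have to carry out carefully is that a single spike of neuron $i$ simultaneously updates every synapse incident to $i$: all the weights $W_{ij}$, $W_{ki}$ and $\widehat W_{ik}$ are revised at that instant, each against the last-spike time of a different partner. One must check that, under the fixed firing order, each of these simultaneous updates receives a delay of the right sign and small enough magnitude to be driven to the intended extreme, so that the weight matrix indeed lands on a single deterministic target; this is where the assumption $\delta_{AR}<\delta_{LW}<\Theta$ and the freedom to choose $\delta$ arbitrarily small are used. Once the three-phase event is in place with its uniform lower bound $\beta\ge e^{-2\Lambda\Theta}\cdot(\text{phase-two probability})>0$ and reaches $\bz_0^*$ exactly, the existence and uniqueness of $\pi$ together with the exponential rate in~\eqref{tot_var} follow from the general theory with no further work.
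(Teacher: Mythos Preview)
Your approach is correct and structurally close to the paper's --- both reach an atom of the form $(\bo;\bo;\text{fixed weights})$ with uniformly positive probability, using that a silent interval of length $\Theta$ deterministically empties the history --- but the handling of the weight component is quite different. The paper works with the skeleton chain $Y_n=Z^*(\Theta n)$: it proves (via exactly your phase-one bound, $\pr_{\sbz^*}(\text{no spikes on }(0,\Theta])\ge e^{-(\Sigma_{\hat\rho}+\Sigma_{\overline\varsigma})\Theta}$) that the hitting time of $\{(\widehat X;X)=(\bo;\bo)\}$ has a uniformly exponential tail, and then simply asserts that $(\widehat W(\Theta n);W(\Theta n))$ ``is clearly an indecomposable aperiodic finite Markov chain'', concluding that any fixed state $(\bo;\bo;\hat\bw';\bw')$ is a positive recurrent atom for~$Y$ with uniformly exponential hitting-time tail. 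Your explicit middle phase --- firing every unit exactly once in a prescribed order, with gaps small enough that each realised pre/post delay drives the corresponding synapse to a deterministic extreme independent of its current level --- is precisely the content needed to substantiate that assertion (the weight process alone is not Markov, so its irreducibility from an empty-history state does call for a construction of this kind). Thus your version is longer and more constructive, while the paper's is terser but leaves the weight-irreducibility step to the reader. One small remark: the inequality $\delta_{AR}<\delta_{LW}$ you invoke is neither assumed in the paper nor needed in your argument, since each unit in phase two fires from an empty personal history and hence with $r(\Theta)=1$ regardless of~$\delta_{AR}$.
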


It would be most interesting to know the properties of the stationary distribution
$\pi$. One basic fact that we can easily establish is that  the distribution $\pi_S$ of
the first two components of $(\widehat X(\infty); X(\infty); \widehat{W}(\infty),
W(\infty) )\sim \pi$ on $(S, \mathscr{S})$ has a density  w.r.t.\ some naturally chosen
measure. Moreover, we can obtain upper bounds for the density.

More precisely, that natural measure on
$(S, \mathscr{S})$ is taken to be the product measure $\mu^{M+N},$ where
\[
\mu (B) = \sum_{n\ge 0} \mu_n (B_n)
 \quad\mbox{for}\quad B= \bigcup_{n\ge 0} (B_n \times \{\bo\}),\quad \bo=(0,0,\ldots)\in \R_+^{\N},
\]
$\mu_n $ being the $n$-dimensional Lebesgue measure and  $B_n$  being  Borel subsets of the respective $n$-dimensional simplices
\begin{equation}
 \label{E_0}
E^{(n)}_0:=  \bigl\{(x_1,  \ldots, x_n )\in \R^{n}_+ : \Theta \ge x_1 > x_2 >\cdots> x_n>0 \bigr\}
\end{equation}
that can be identified with $E^{(n)}=E^{(n)}_0\times \{\bo\}$. We use the convention that $\mu_0$ is just the unit mass at~0.

For $(\bm; \bn):=(m_1, \ldots, m_M;n_1, \ldots,   n_N )\in \Z_+^{M+N}$, set
\[
E^{(\sbm; \sbn )}:= \prod_{k\le M} E^{(m_k)} \times \prod_{i\le N}E^{(n_i)};
\]
similarly, $E^{(\sbm; \sbn )}_0$ is the product of the respective $E_0$-sets.

To simplify the formulation of the next theorem, we will slightly abuse notation by identifying the sets $E^{(\sbm; \sbn )} $ with $E^{(\sbm; \sbn )}_0$ and so considering the latter as the components of the state space $S$ (so that the components of $\mu$ are actually given on finite-dimensional spaces).

\begin{thm}
 \label{Thm_den}
Under the stated assumptions for Model~II, if all the functions $\hat\epsilon_{ik},$ $\epsilon_{ij},$
$\varsigma_i$ and~$r$ are continuously differentiable, then, for any $(\bm; \bn)\in \Z_+^{M+N}$, the restriction of $\pi_S$ to $E^{(\sbm; \sbn )}_0$ has a density $\psi_{\sbm, \sbn}$ w.r.t.\ $\mu$ admitting an upper bound
\[
\biggl(\prod_{k\le M} \hat\rho_k^{ m_k}\biggr)
 \biggl( \prod_{i\le  N} \overline\varsigma_i^{ n_i}\biggr)
\exp\left\{-\Theta\Sigma_{\hat\rho}\right\}\le \Lambda^{\Sigma_{{\mbox{\tiny\boldmath$m$}}} + \Sigma_{{\mbox{\tiny\boldmath$n$}}}}
\exp\left\{-\Theta\Sigma_{\hat\rho}\right\},
\]
where $\Sigma_{\hat \rho}:= \sum_k\hat \rho_k,$   $\Lambda:=\max\{\max_k\hat\rho_k,
\max_i\overline{\varsigma}_i \},$ $\Sigma_{\sbm} :=\sum_{k\le M}  m_k,$ and $ \Sigma_{\sbn}:=
\sum_{i\le N} n_i .$ The density function $\psi_{\sbm, \sbn}$ is continuously
differentiable in the interior of $E^{(\sbm, \sbn )}_0$ and has finite limits on its
boundary.
\end{thm}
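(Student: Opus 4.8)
The plan is to exploit the bounded-memory structure of the dynamics: the configuration recorded by $(\widehat X(t);X(t))$ depends \emph{only} on the spikes fired in the window $(t-\Theta,t]$, everything older having already left the window. Using stationarity (Theorem~\ref{Thm_erg}), I would condition on the state $Z^*(t-\Theta)\sim\pi$ and observe that it fixes all firing intensities on $(t-\Theta,t]$: the intensity $R_i^*$ at time $s$ reads off the spikes in $(s-\Theta,s]$ and the current weights, all of which are determined by $Z^*(t-\Theta)$ together with the firings so far. Given $Z^*(t-\Theta)=\bz_0^*$, the sources fire as independent Poisson processes of rates $\hat\rho_k$ and the neurons fire with the conditional intensities $R_i^*$ from~\eqref{Ri*}, so by the exponential (Jacod) likelihood formula the density of producing exactly the configuration $\bz\in E^{(\sbm;\sbn)}_0$ equals
\[
\Bigl(\prod_{k\le M}\hat\rho_k^{m_k}e^{-\hat\rho_k\Theta}\Bigr)\prod_{i\le N}\Bigl(\prod_{l=1}^{n_i}R_i^*\bigl(Z^*(s_{i,l}-)\bigr)\Bigr)\exp\Bigl(-\int_{t-\Theta}^{t}\sum_{i\le N}R_i^*(Z^*(s))\,ds\Bigr),
\]
where the $s_{i,l}$ are the firing times encoded by $\bz$ and the trajectory $Z^*(\cdot)$ is the deterministic one started at $\bz_0^*$ and driven by the firings prescribed by~$\bz$. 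Integrating this against $\pi(d\bz_0^*)$ produces $\psi_{\sbm,\sbn}$; since the sources are autonomous, their factor is $\bz_0^*$-independent and pulls out exactly as $\prod_k\hat\rho_k^{m_k}e^{-\hat\rho_k\Theta}=\bigl(\prod_k\hat\rho_k^{m_k}\bigr)e^{-\Theta\Sigma_{\hat\rho}}$.

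The upper bound is then immediate from this representation. Because $\varsigma_i\le\overline\varsigma_i$ and $r\le 1$, we have $R_i^*\le\overline\varsigma_i$ pointwise, so each neuronal factor satisfies $\prod_l R_i^*\le\overline\varsigma_i^{\,n_i}$ and the survival exponential is $\le 1$; integrating the resulting bounded integrand against the probability measure $\pi$ preserves the bound and gives $\psi_{\sbm,\sbn}\le\bigl(\prod_k\hat\rho_k^{m_k}\bigr)\bigl(\prod_i\overline\varsigma_i^{\,n_i}\bigr)e^{-\Theta\Sigma_{\hat\rho}}$, whence the stated $\Lambda^{\Sigma_{\sbm}+\Sigma_{\sbn}}e^{-\Theta\Sigma_{\hat\rho}}$ using $\hat\rho_k,\overline\varsigma_i\le\Lambda$. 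Finite boundary limits follow from the same formula: the integrand stays bounded by $\prod_i\overline\varsigma_i^{\,n_i}$ and extends continuously as the spike ages approach the faces of $E^{(\sbm;\sbn)}_0$.

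For the $C^1$ assertion I would differentiate the representation in the coordinates of $\bz$. The firing times $s_{i,l}=t-(\Theta-x_{i,l})$ depend affinely on those coordinates, and with $\hat\epsilon_{ik},\epsilon_{ij},\varsigma_i,r\in C^1$ the integrand is continuously differentiable in $\bz$ with derivatives bounded uniformly in $\bz_0^*$ (all rates and $\Theta$ being finite), so dominated convergence licenses differentiation under the integral and yields $\psi_{\sbm,\sbn}\in C^1$ on the interior. In Model~I (constant weights) this is the whole argument. The hard part will be Model~II: the piecewise-constant weight trajectory feeding $R_i^*$ has each STDP jump routed to $g_{ij}(d)$ according to which bin $(u_{ij}(m,d),u_{ij}(m,d+1)]$ the relevant inter-spike interval lands in, so the integrand can jump as $\bz$ crosses one of the finitely many threshold hypersurfaces. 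When that interval involves a pre-window spike of $\bz_0^*$, integrating over the continuous part of $\pi$ moves the threshold and smooths the jump; the delicate point is to verify this smoothing is genuine, which I would organise as an induction over the strata $E^{(\sbm;\sbn)}_0$ ordered by $\Sigma_{\sbm}+\Sigma_{\sbn}$, feeding the already-established regularity on lower-dimensional components in as boundary data for the next level. I therefore expect the stratum recursion to deliver the density and the bound cleanly, with the weight-threshold regularity for Model~II being the final and most technical step.
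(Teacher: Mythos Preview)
Your argument for the existence of the density and for the upper bound is exactly the paper's: condition on $Z^*(t-\Theta)\sim\pi$, write down the explicit time-$\Theta$ transition density (your Jacod formula is the paper's $p(\bv^*,\bz)$), bound it by replacing each $R_i^*$ by $\overline\varsigma_i$ and dropping the neuronal survival exponential, and then integrate the bound against~$\pi$ via Fubini. The $C^1$ claim for Model~I is likewise handled identically: the paper simply notes that smoothness of $p(\bv^*,\bz)$ in $\bz$ follows from that of $\varsigma_i,\epsilon_{ij},r$ and passes the derivative under the $\pi$-integral.

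Where you diverge from the paper is in your treatment of Model~II. The paper does not engage with the STDP thresholds at all: it writes ``Clearly, the function $p(\bv^*,\bz)$ is continuously differentiable in $\bz$'' and moves on. You are right to flag this as delicate, but your proposed fix has a gap. When the two spikes determining an STDP update both lie in $(t-\Theta,t]$ --- say neuron~$j$ fires at $s_1$ and neuron~$i$ at $s_2>s_1$, both encoded in $\bz$ --- the discontinuity hypersurface $\{s_1-s_2=u_{ij}(m,d)\}$ sits at a location in $E^{(\sbm;\sbn)}_0$ fixed by~$\bz$; the only dependence on the initial state $\bz_0^*$ is through the discrete pre-update level~$m$, and integrating over the finite weight alphabet merely yields a finite sum of pieces, each with its own immovable jump hypersurface. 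Averaging over the continuous part of $\pi$ does nothing here because the hypersurface does not move with~$\bz_0^*$. Your stratum induction does not touch this either: it feeds boundary data up from lower-dimensional components, whereas these discontinuities live in the interior. So neither the paper's one-line dismissal nor your smoothing-plus-induction scheme actually delivers interior $C^1$ for Model~II as stated; generically one should only expect $\psi_{\sbm,\sbn}$ to be piecewise $C^1$ across the finitely many STDP threshold hyperplanes.
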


\begin{rema}{\em As will be easily seen from the proof of Theorem~\ref{Thm_den}, if we assume that the function $r$ has form~\eqref{refr} (and so is not continuously differentiable), the assertion of the theorem will remain true with the only amendment that the density components $\psi_{\sbm, \sbn}$ will be continuously differentiable inside their supports in the spaces of the respective dimensionalities.
}
\end{rema}

\begin{proof}[Proof of Theorem~\ref{Thm_erg}]
It is obvious that $Z^*$ is aperiodic and stochastically continuous. So it suffices to
show that the Markov ``skeleton"  chain $Y=\{Y_n := Z^* ( \Theta n),$ $n=0,1,2,\ldots\},$ has a recurrent state whose first hitting time distribution tail decays exponentially fast uniformly in the chain's initial state~$Z^*(0)$  (see e.g.\ Theorem~18.1 in~\cite{Borov98}).

First we will use a standard argument to show that the tail of $\tau:= \inf  \{n>0: Z(
\Theta n) =(\mathbf{0};\mathbf{0})\}$ (i.e.\ the first value $n$ such that there were
no spikes in $(\Theta (n-1), \Theta n]$) admits such a bound. Indeed, setting for
convenience $\pr_{\sbz^*} (\cdot):= \pr (\cdot| Z^* (0)=\bz^*)$, we have, for any
$\bz^*\in S^*$ and $t>0$,
\begin{align}
\pr_{\sbz^*}  (\mbox{no spikes in } (0,t]) &=\exp\left\{
 -\int_0^t \biggl(\sum_k\hat \rho_k + \sum_i R_i^* (\cdots )\biggr)ds
\right\}\notag
\\
 & \ge \exp\{ -(\Sigma_{\hat \rho} + \Sigma_{\overline{\varsigma}})t\}
  =: e^{-\gamma t},
   \label{prBound}
\end{align}
where  $\Sigma_{\overline{\varsigma}}:=\sum_i \overline{\varsigma}_i,$
and $(\cdots)$ represents the argument of $R_i^*$ along the trajectory of $Z^*$ on $[0,t]$ that started at $\bz^*$ and experienced no jumps.

Now setting $A_n:= \{\mbox{no spikes in }(\Theta (n-1), \Theta n]\}$ we obtain, using
recursively the Markov property and bound~\eqref{prBound}, that,  for $ n\ge 0$,
\begin{align}
\pr_{\sbz^*} (\tau >n)
 & =
 \pr_{\sbz^*} \left(\bigcap_{m=1}^n A_m^c\right)
  = \exn  \pr_{\sbz^*}  \left(\bigcap_{m=1}^n A_m^c \bigg| Y_{n-1}\right)
   \notag
   \\
   & = \exn  \pr_{\sbz^*}  \left(\bigcap_{m=1}^{n-1} A_m^c \bigg| Y_{n-1}\right)
     \pr_{Y_{n-1}}  \left(  A_n^c  \right)
     \notag
     \\
     & \le (1-e^{-\gamma \Theta})  \exn  \pr_{\sbz^*}  \left(\bigcap_{m=1}^{n-1} A_m^c \bigg| Y_{n-1}\right) =
    (1-e^{-\gamma \Theta})  \pr_{\sbz^*} \left(\bigcap_{m=1}^{n-1} A_m^c\right)
   \notag
   \\
 &    \le \cdots \le  (1-e^{-\gamma \Theta})^n.
  \label{prBound_1}
\end{align}

Next we observe that $(\widehat{W} (\Theta n); {W} (\Theta n))$ is clearly an
indecomposable aperiodic finite Markov chain, and hence it is ergodic. Take any fixed
state $(\hat\bw'; \bw')$ of this chain; as it is well known, for any initial condition,
the first hitting time of that state has an exponentially fast decaying distribution
tail. Hence it is obvious that the state $(\mathbf{0};\mathbf{0};\hat\bw'; \bw')\in
S^*$ will be positive recurrent for the chain $Y $, and that the tail of the first hitting
time of that state by $Y$ will admit a geometrically fast vanishing upper bound uniform
in the initial condition of the state. The theorem is proved.
\end{proof}

\begin{proof}[Proof of Theorem~\ref{Thm_den}]
First we will establish existence of density for transition probabilities, and then infer the desired result from that fact.

Suppose our process started at point $Z^* (0)=\bv^*$ and, at time $\Theta,$ was at
a point $\bz^*=(\hat \bx;\bx;\hat \bw;\bw)\in S^*$ with  $\hat\bx_k\in E^{( m_k)},$
$k\le  M$, $\bx_i\in E^{( n_i)},$ $i\le N$. It is clear that the states  $\bv^*$ and $
\bz =(\hat \bx;\bx)\in S$ completely specify the {\em trajectory\/} of $Z^*(t)$ on the
time interval $[0,\Theta]$; denote this trajectory by $\bu (t),$ $t\in [0,\Theta]$ (so
that $\bu (0)= \bv^*$ and $\bu (\Theta)=\bz^*$). Then, observing that
$x_{i,1},\ldots,x_{i,n_i}$ are the firing times for neuron $i$ in the time interval
$[0,\Theta]$, we use the standard argument to show that
\begin{align}
\pr_{\sbv^*} &\bigl(Z(\Theta)   \in d\hat \bx_1 \times \cdots  \times d\hat \bx_M
  \times d  \bx_1 \times\cdots  \times d  \bx_N\bigr)
   \notag
   \\
    &=\left(\prod_{k\le M}\hat\rho_k^{ m_k}e^{- \hat\rho_k \Theta}\right) \mu_{  m_1 } (d\hat \bx_1)\cdots \mu_{  m_M } (d\hat \bx_M)
    \notag
     \\
     & \hspace{10mm}\times \left[
     \prod_{i\le N}  \left(\prod_{l_i\le n_i} R_i^* (\bu (x_{i,l_i}-)) \right)
     \exp \left\{-\int_0^\Theta R_i^* (\bu (t) )dt\right\}
     \right]
     \notag
     \\
     & \hspace{10mm}\times
     \mu_{   n_1 } (d  \bx_1)\cdots \mu_{   n_N } (d  \bx_N)
     \notag
     \\
     & =: p (\bv^* , \bz ) \mu_{  m_1 } (d\hat \bx_1)\cdots \mu_{  m_M } (d\hat \bx_M)
    \mu_{   n_1 } (d  \bx_1)\cdots \mu_{   n_N } (d  \bx_N)
    \notag
    \\
     & = p (\bv^* , \bz ) \mu(d\bz).
    \label{dens}
\end{align}
Clearly, the function $p (\bv^*, \bz )$ is continuously differentiable in $\bz$ in the
interior of $E^{(\sbm, \sbn )}$, has finite limits on its boundary, and admits an upper
bound of the form
\begin{equation}
p (\bv^*, \bz ) \le
 \biggl(\prod_{k\le M} \hat\rho_k^{ m_k}\biggr)
 \biggl( \prod_{i\le  N} \overline\varsigma_i^{ n_i}\biggr)
\exp\left\{-\Theta\Sigma_{\hat\rho}\right\}.
    \label{bound}
\end{equation}

Next, in view of~\eqref{dens}, for any $B\in\mathscr{S}$, we can use Fubini's theorem to write
\begin{align}
\pi_S (B) &= \int_{S^*}\pi (d \bv^*)\int_B p (\bv^*, \bz )\mu (d\bz)
 \notag
 \\
 &=  \int_B   \biggl[\int_{S^*}\pi (d\bv^* )p (\bv^*, \bz )\biggr]\mu (d\bz)=:
 \int_B \psi (\bz) \mu (d\bz).
\label{dens_stat}
\end{align}
This means that $\pi_S$ does have density $\psi$ w.r.t.\ $\mu$, and \eqref{bound} implies that $\psi$ admits the desired upper bound.

That $\psi$ is continuously differentiable in the relative  interiors of  the
components of its supporting space follows from representation~\eqref{dens}, the last
relation in~\eqref{dens_stat} and the assumption that $\varsigma_i$ and the kernel
functions $\epsilon_{ij}$ are all continuously differentiable, the $\epsilon$'s
vanishing outside $[0,\Theta].$ Theorem~\ref{Thm_den} is proved.
\end{proof}

\section{Approximation of $\pi$ by finite-dimensional distributions}
\label{appra}

In this section we will be dealing with the simpler Model~I. Even for that model,  the
state space is an infinite hierarchy of multidimensional simplices, so working with
non-trivial distributions on it is not easy. The natural question in such a situation
is whether one can find an appropriate approximation to the distribution in question,
together with an approximation error bound.

For our model, a tempting approach to finding such approximations is to consider
``truncated" processes $Z^{\langle n \rangle}$ in which none of the neurons is
``allowed" to fire more than the fixed number $n\ge 1$ times  within any given time
interval of length~$\Theta$. In fact, if the model assumes existence of absolute
refractory periods of positive length by stipulating, say, that~\eqref{refr} holds true,
then that condition will automatically be satisfied
(note, however, that one can still apply truncation with $n<\Theta/\delta_{AR}$ to reduce
dimensionality).

The only difference in the dynamics of the process $Z^{ \langle n \rangle }$ compared
to  those of $Z$ is that neurons' firing intensities will now be given by
\begin{equation}
R_i^{ \langle n \rangle } (\bz) := R_i  (\bz) \mathbf{1} (x_{i,n}=0), \quad i=1,\ldots, N,
 \label{Rin}
\end{equation}
(cf.~\eqref{Ri}). It is obvious that  $Z^{ \langle n \rangle }$ will also be an ergodic
Markov process. Denote its stationary distribution on $(S,\mathscr{S})$ by $\pi^{
\langle n \rangle }$, while for the stationary distribution of $Z$ (on the same
measurable space) we will re-use notation~$\pi$.

\begin{thm}
 \label{Thm_appr}
Under the stated assumptions for Model~I,
\begin{equation}
 \label{appr}
\sup_{B\in \mathscr{S} }\bigl|\pi( B) - \pi^{ \langle n \rangle }( B) \bigr|
 \le C n^{-(n+1)/2} e^{\alpha n},
 \end{equation}
where $ C=\frac{2N}{\sqrt{\pi}} \exp\{\Theta (\Sigma_{\hat \rho} + \Sigma_{\overline \varsigma})\}$ and $\alpha =(1 + \ln (\Theta \overline{\varsigma}))/2.$
\end{thm}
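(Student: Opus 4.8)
The plan is to control the total-variation distance between the two stationary laws by a coupling argument built on the regeneration structure already exploited in the proof of Theorem~\ref{Thm_erg}, combined with a Poisson-type tail bound for the number of spikes a neuron can fire inside a window of length~$\Theta$. The super-exponential rate will come from Stirling's formula applied to this tail, while the prefactor $\exp\{\Theta(\Sigma_{\hat\rho}+\Sigma_{\overline\varsigma})\}$ will be produced by the expected length of a regeneration cycle.

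First I would set up a common-noise (thinning) coupling of $Z$ and $Z^{\langle n\rangle}$. Since $R_i^{\langle n\rangle}(\bz)=R_i(\bz)\mathbf 1(x_{i,n}=0)\le R_i(\bz)$ by~\eqref{Rin}, one can drive both processes by the same external Poisson streams and the same thinning marks so that every firing of $Z^{\langle n\rangle}$ is also a firing of $Z$. The crucial structural consequence is a re-synchronisation property: whenever the window of $Z$ is empty over an interval $(t-\Theta,t]$, so that $Z(t)=(\mathbf 0;\mathbf 0)$, that same interval carries no firings of $Z^{\langle n\rangle}$ either, whence $Z^{\langle n\rangle}(t)=(\mathbf 0;\mathbf 0)$ as well. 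Thus the empty-window epochs of $Z$ — which, by~\eqref{prBound}, occur over each $\Theta$-step with probability at least $e^{-\gamma\Theta}$, $\gamma:=\Sigma_{\hat\rho}+\Sigma_{\overline\varsigma}$ — are \emph{common} regeneration times at which both coupled processes sit at the atom $(\mathbf 0;\mathbf 0)$. Within one such cycle of $Z$ the two processes evolve identically \emph{unless} a ``truncation event'' occurs, i.e.\ some neuron attempts to fire while already carrying $n$ spikes in its window.

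Next I would pass to the two-sided stationary version obtained by concatenating i.i.d.\ coupled cycles, so that the first coordinate is stationary $Z\sim\pi$ and the second stationary $Z^{\langle n\rangle}\sim\pi^{\langle n\rangle}$. By the coupling inequality $\sup_{B\in\mathscr S}|\pi(B)-\pi^{\langle n\rangle}(B)|\le\pr(Z(0)\ne Z^{\langle n\rangle}(0))$, and by the re-synchronisation property the event on the right forces the cycle of $Z$ covering the origin to carry at least one truncation event. As this covering cycle is length-biased, the regenerative (Palm) identity gives
\[
\pr\bigl(Z(0)\ne Z^{\langle n\rangle}(0)\bigr)\le\frac{\exn\bigl[T\,\mathbf 1(G^c)\bigr]}{\exn[T]},
\]
where $T$ is the length of a generic cycle in $\Theta$-steps and $G^c$ is the event that it carries a truncation event. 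Applying Cauchy--Schwarz, $\exn[T\mathbf 1(G^c)]\le\sqrt{\exn[T^2]}\,\sqrt{\pr(G^c)}$, and estimating the cycle-length moments through the geometric tail $\pr(T>k)\le(1-e^{-\gamma\Theta})^k$ (exactly as in~\eqref{prBound_1}) produces the factor $\exp\{\Theta(\Sigma_{\hat\rho}+\Sigma_{\overline\varsigma})\}$.

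It then remains to estimate $\pr(G^c)$. A union bound over the $N$ neurons and the intensity bound $R_i\le\overline\varsigma$ reduce this to the stationary probability $\pr_\pi(\nu_i\ge n)$ that a single neuron carries at least $n$ spikes in its window. Since neuron $i$'s intensity never exceeds $\overline\varsigma$, its window count is stochastically dominated by a $\mathrm{Poisson}(\Theta\overline\varsigma)$ variable, giving $\pr_\pi(\nu_i\ge n)\le(\Theta\overline\varsigma)^n/n!$ (the same bound also follows from Theorem~\ref{Thm_den} by integrating the density estimate $\overline\varsigma_i^{\,n}$ over the ordered simplices $E^{(n)}_0$ of volume $\Theta^n/n!$). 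Feeding this into the Cauchy--Schwarz bound and using $n!\ge\sqrt{2\pi n}\,(n/e)^n$ turns $\sqrt{(\Theta\overline\varsigma)^n/n!}$ into the announced rate $n^{-(n+1)/2}e^{\alpha n}$ with $\alpha=(1+\ln(\Theta\overline\varsigma))/2$, the constant $\tfrac{2N}{\sqrt\pi}$ collecting the factor $N$ from the union bound together with the numerical constants from Stirling. The main obstacle I anticipate is precisely the passage from within-cycle agreement to a bound on the \emph{stationary} disagreement probability: because the coupling decouples after a truncation and only re-synchronises at the next common empty window, one must control the contribution of atypically long cycles that happen to carry a truncation, and this second-moment (length-biasing plus Cauchy--Schwarz) step is what costs a square root relative to the raw Poisson tail, yielding the exponent $(n+1)/2$ rather than $n+1$.
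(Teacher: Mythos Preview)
Your overall strategy---couple the two processes, re-synchronise at empty windows, and feed a Poisson tail through Stirling---is that of the paper, but the re-synchronisation step contains a genuine gap. The pointwise inequality $R_i^{\langle n\rangle}(\bz)\le R_i(\bz)$ only forces the firings of $Z^{\langle n\rangle}$ to be a subset of those of $Z$ while the two processes sit at the \emph{same} state~$\bz$. After the first truncation event the trajectories differ, and you must compare $R_i^{\langle n\rangle}\bigl(Z^{\langle n\rangle}(t)\bigr)$ with $R_i\bigl(Z(t)\bigr)$ at \emph{different} states; since the model allows inhibitory weights $W_{ji}<0$, an extra spike of neuron~$i$ in $Z$ can depress $R_j(Z)$ below $R_j^{\langle n\rangle}(Z^{\langle n\rangle})$, and the next thinning mark can then fire neuron~$j$ in $Z^{\langle n\rangle}$ but not in~$Z$. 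Hence an empty window of $Z$ need not be empty for $Z^{\langle n\rangle}$, and your ``common regeneration at $Z$-silence'' collapses. The paper fixes this by driving both processes with a common unit-rate Poisson field $\Pi$ on $\R_+\times\R$ and re-synchronising at times $t$ with $\Pi\bigl((t-\Theta,t]\times(-\Sigma_{\hat\rho},\Sigma_{\overline\varsigma}]\bigr)=0$: silence of the \emph{driving noise} on the full strip forces both processes to $(\mathbf 0;\mathbf 0)$ regardless of their current states, and it is this event (of probability $q=e^{-\gamma\Theta}$) that produces the prefactor $1/q=\exp\{\Theta(\Sigma_{\hat\rho}+\Sigma_{\overline\varsigma})\}$.

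There is a second, related gap: you replace $\pr(G^c)$---an event about a regeneration cycle of random, unbounded length---by the instantaneous stationary probability that some $\nu_i\ge n$, without justification. The paper avoids this altogether: it splits each $\Theta$-block into two halves and observes that a divergence during block~$j$ forces the \emph{driving} Poisson for some neuron to put at least $n/2$ points into one of the half-intervals of block $j-1$ or~$j$; this is a clean $\mathrm{Poisson}(\Theta\overline\varsigma_i/2)$ tail at level $n/2$, so $(n/2)!$ appears in the denominator and Stirling gives exactly $n^{-(n+1)/2}e^{\alpha n}$. Your Cauchy--Schwarz route through $\sqrt{(\Theta\overline\varsigma)^n/n!}$ would, even after repairing the re-synchronisation, only yield $n^{-n/2-1/4}e^{\alpha n}$, so the stated constant~$C$ could not be recovered.
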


\begin{proof} We will use coupling. Assume that $\Pi$ is a Poisson random field of unit intensity on $\R_+\times \R$, given on some probability space,
and construct a process $\{(Z (t), Z^{ \langle n \rangle }(t))\}_{t\ge 0}$ with the
state space $S\times S$, whose components follow the original and ``truncated"
dynamics, respectively, start at a common state $Z (0)= Z^{ \langle n \rangle }(0)\in
S$, and are driven by the field $\Pi$ via the following simple mechanism.

Introduce intervals
\begin{align*}
\widehat I_k  & := \biggl(-\sum_{m=1}^k \hat\rho_m, -\sum_{m=1}^{k-1} \hat\rho_m\biggr],
\quad k=1,\ldots, M,\\
 I_i &  :=  ( \vartheta_{i-1}, \vartheta_i],
\quad i=1,\ldots, N, \quad \vartheta_i:= \sum_{j=1}^{i} \overline{\varsigma}_j,
\end{align*}
and stipulate that, in both $Z $ and $ Z^{ \langle n \rangle },$ external source $k$
fires at time $t$ if $\Pi (\{t\}\times \widehat I_k)>0$ (note that $\{\Pi ([0,t]\times
\widehat I_k)\}_{t\ge 0}$ are independent Poisson processes with constant intensities $\hat
\rho_k,$ $k=1,\ldots, M$).

Likewise, in the process $Z$ neuron $i$ fires at time $t$ if
\[
\Pi \bigl(\{t\}\times (  \vartheta_{i-1}, \vartheta_{i-1}+ R_i (Z(t-))]\bigr)>0,
\]
and that happens in the process $Z^{ \langle n \rangle }$ at time $t$ if
\[
\Pi \bigl(\{t\}\times ( \vartheta_{i-1} ,\vartheta_{i-1}+ R_i^{ \langle n \rangle }(Z^{
\langle n \rangle }(t-))]\bigr)>0.
\]
Clearly,  $(Z ,Z^{ \langle n \rangle })$ is a well-defined Markov process, and its
components follow the desired dynamics. Note also that the process will be ergodic,
like each of its components (this is obvious e.g.\ from Theorem~\ref{Thm_erg}).

Now denote by $(Z (\infty), Z^{ \langle n \rangle }(\infty))$ a random element of
$S\times S$ whose distribution coincides with the stationary distribution of $(Z ,Z^{
\langle n \rangle })$. Using the standard argument, it is easily seen that
\[
\bigl|\pi( B) - \pi^{ \langle n \rangle }( B) \bigr| \le \pr (Z (\infty)\neq Z^{
\langle n \rangle }(\infty))=:P_n,\quad
 B\in\mathscr{S}.
\]
To bound $P_n$, denote by $|T_K|,$ $K\in\N,$ the total length of the set
\[
T_K := \{ t\in [0, \Theta K]: \, Z (t) \neq Z^{ \langle n \rangle }(t)\}
\]
and observe that, from the ergodicity of  $(Z ,Z^{ \langle n \rangle })$, one has
\[
P_n =\lim_{K\to\infty} \frac{|T_K|}{\Theta K}.
\]

As we are interested in the stationary distribution, we can assume w.l.o.g.\ that the
common starting point of $Z$ and $Z^{ \langle n \rangle }$ has  no components $\bx_i$
in $E^{(m)},$ $m\ge n$. Then the trajectories  $Z(t)$ and $Z^{ \langle n \rangle }(t)$,
having originated at the same point,  will coincide with each other till the time $T'$
when one of the values $X_i (t),$ $i=1,\ldots, N,$ enters $E^{(n)}$. Then the
respective neuron $i$ will stay silent in $Z^{ \langle n \rangle }$ at least till the
time when the number of spikes produced by $i$ and ``visible" in the time window
$(t-\Theta, t]$ drops below~$n$, while in $Z$ the respective neuron will still be able
to fire. Thus, past that time point $T'$, the trajectories $Z(t)$ and $Z^{ \langle n
\rangle }(t)$ can diverge. They will have to meet again, though, and the latest that
will occur is at the end of the next ``silent interval" of length~$\Theta$, which, in
its turn, occurs no later than at the time
\[
\inf\bigl\{t>T':  \Pi \bigl((t-\Theta, t] \times (-\Sigma_{\hat \rho}, \Sigma_{\overline \varsigma}]=0\bigr)\bigr\}.
\]

To make use of the above argument to obtain an upper bound for $P_n$,  introduce
two random sequences, $\{\varkappa_m\}_{m\ge 0}$ and $\{\gamma_m\}_{m\ge 0}$, as
follows.  Letting for brevity $\theta_j:= \Theta j,$ set
\begin{align*}
V_{i,j-} &:= \Pi \bigl( (\theta_{j-1},\theta_{j-1} + \Theta/2]
 \times I_i  \bigr),
  \\
V_{i,j+}& := \Pi \bigl( (\theta_{j-1} + \Theta/2,\theta_{j} ]
 \times I_i  \bigr),
\end{align*}
and then put  $\varkappa_0 := \gamma_0 :=0$ and, for $m\ge 1,$
\begin{align*}
\varkappa_{m+1}& := \inf\Bigl\{
 j> \gamma_m: \, \max_{i\le N} \max\bigl\{V_{i,j-}, V_{i,j+}\bigr\}\ge  {n}/{2}\}
\Bigr\}, \\
 \gamma_{m+1}& := \inf\Bigl\{
 j> \varkappa_{m+1}: \,
  \Pi \bigl((\theta_{j-1},\theta_{j}] \times (-\Sigma_{\hat \rho}, \Sigma_{\overline{\varsigma}}]\bigr) =0
\Bigr\}.
\end{align*}
Clearly, both  $\{\varkappa_m\}$ and $\{\gamma_m\}$ are well-defined a.s.\ infinite increasing sequences of proper random variables.

Now if  $Z(\theta_{j-1})=Z^{ \langle n \rangle }(\theta_{j-1})$ but, for some $t\in
(\theta_{j-1},\theta_{j}]$, one has $Z(t)\neq Z^{ \langle n \rangle }(t)$, then, for
some $i\le N,$ at least one of the following two relations must hold:
\[
\max\bigl\{V_{i,(j-1)-}, V_{i,(j-1)+}\bigr\} \ge \frac{n}{2}, \quad
 \max\bigl\{V_{i,j-}, V_{i,j+}\bigr\} \ge \frac{n}{2}
\]
(if none of the two holds  then, in any time interval of length $\Theta$ within
$(\theta_{j-2}, \theta_j]$, neuron $i$ will have fewer than~$n$ spikes). Thus the
values $\varkappa_m$ ``mark" time intervals where $Z$ and $Z^{ \langle n \rangle }$ may
split, whereas $\gamma_m$ ``mark" those intervals following $\varkappa_m$ where $Z$ and
$Z^{ \langle n \rangle }$ must merge (provided that they have split indeed).

Set
\[
H_K := \inf\{m\ge 1: \, \varkappa_m >K\} -1.
\]
Clearly, for $t>0$ and an arbitrary fixed $\varepsilon >0$,
\begin{equation}
\pr (|T_K|> t) \le \pr (|T_K|> t, H_K \le \varepsilon K) +
 \pr (H_K > \varepsilon K) .
 \label{TH}
\end{equation}
First we will bound the last term. Observe that
\[
H_K \le \sum_{j=1}^K \chi_j, \quad \mbox{where}\quad
 \chi_j := \mathbf{1} \Bigl(\max_{i\le N} \max\{V_{i,j-}, V_{i,j+}\} \ge n/2\Bigr)
\]
are i.i.d.\ Bernoulli random variables with success probability
\[
 \pr (\chi_j =1) = \pr \Bigl(\max_{i\le N} \max\{V_{i,j-}, V_{i,j+}\} \ge n/2\Bigr)
 \le 2\sum_{i\le N} \pr (V_{i,j-}\ge n/2) .
\]
Since $V_{i,j-}$ has the Poisson distribution with parameter $\lambda_i:=\Theta\overline{\varsigma}_i/2$, one can use Taylor's formula for the exponential series (with remainder in Lagrange form) and then Stirling's formula to write
\begin{align*}
\pr (V_{i,j-}\ge n/2) & \le \frac{\lambda_i^{n/2}}{(n/2)!}
 \le \frac{1}{\sqrt{2\pi}} \biggl(\frac{n}{2}\biggr)^{-(n+1)/2}
 \exp\Bigl\{\frac{n}{2}(1 +\ln \lambda_i)\Bigr\}\\
 & \le \frac{1}{\sqrt{\pi}} \,  n^{-(n+1)/2}
  e^{\alpha n}.
\end{align*}
Therefore
\[
 \pr (\chi_j =1)\le \frac{2N}{\sqrt{\pi}} \,  n^{-(n+1)/2} e^{\alpha n} =: p_n.
\]
Assuming that $p_n<1$ (otherwise the bound in the theorem will become trivial),  and
that $\delta:= \varepsilon - p_n \equiv \varepsilon - \exn \chi_j >0$, we obtain
\begin{align}
\pr (H_K > \varepsilon K) & = \pr (H_K - K\exn \chi_1> \delta K)
 \notag
 \\
 & \le \pr \biggl(\sum_{j\le K} (\chi_j - \exn \chi_j)> \delta K\biggr)
  \le e^{-2 \delta^2 K}
 \label{HK}
\end{align}
by virtue of Theorem~10 from Chapter~5 of~\cite{Borov99}.

Now we will turn to the first term on the RHS of~\eqref{TH}. From the definitions of our random variables, it is obvious that $|T_K|\le \Theta\sum_{m\le H_K} (\gamma_m - \varkappa_m)$, and so \[
\pr (|T_K|> t, H_K \le \varepsilon K)\le
 \pr \biggl(\sum_{m\le \varepsilon K} (\gamma_m - \varkappa_m)> \frac{t}{\Theta}\biggr)
 =: Q.
\]
From the strong Markov property it follows that $\eta_m:= \gamma_m - \varkappa_m$ are i.i.d.\ geometric random variables, with $\pr(\eta_1 = k)=q (1-q)^{k-1},$ $k=1,2,\ldots,$ where
\[
q:= \pr \Bigl( \Pi \bigl((t-\Theta, t] \times (-\Sigma_{\hat \rho}, \Sigma_{\overline \varsigma}]\bigr)=0\Bigr) = \exp\{-\Theta(\Sigma_{\hat \rho} +\Sigma_{\overline \varsigma})\}.
\]
Clearly, $\exn \eta_1 = 1/q$ and $\varphi (a):=\exn e^{a\eta_1}<\infty $ for $a< -\ln (1- q),$
\begin{equation}
\varphi (a) = 1 +\frac{a}{q} + o(1), \qquad a\to 0.
 \label{phi}
\end{equation}
Therefore, assuming w.l.o.g.\ that $\varepsilon K$ is integer, we have by the exponential Chebyshev's inequality that
\[
Q \le \bigl(\varphi (a) \bigr)^{\varepsilon K}e^{-a t/\Theta}
 = \exp\biggl\{ - \varepsilon K \biggl(\frac{a t}{\Theta \varepsilon K} -\ln\varphi (a)\biggr) \biggr\}.
\]
One can see from~\eqref{phi} that, choosing $t=t_K:= \Theta
\varepsilon K(1+h)/q$ for an arbitrary  fixed $h>0$, we will have, for small enough $a$, the bound
\[
Q\le e^{-\varepsilon cK} \ \mbox{ for some } \ c=c(a,h).
\]

From here, \eqref{TH} and \eqref{HK}  we obtain the bound
\[
\pr (|T_K|> t_K) \le  e^{-\varepsilon cK} + e^{-2 \delta^2 K}.
\]
Clearly, $\sum_K \pr (|T_K|> t_K)<\infty$, and so, by Borel-Cantelli lemma, with
probability one  we have $|T_K |\le t_K$ for all large enough $K$. Therefore,
\[
P_n \le \limsup_{K\to\infty} \frac{t_K}{K\Theta} =\frac{\varepsilon (1+h)}{q}.
\]
As this holds for any $\varepsilon >p_n$ and $h>0,$ we conclude that $P_n\le p_n/q,$ which completes the proof of the theorem.\end{proof}

\section{Computing the stationary distribution}
\label{secComp}

It is not difficult to derive differential equations (and boundary conditions for them)
that the components $\psi_{\sbm, \sbn}$ of the stationary density of $Z$ will satisfy in the
case of Model~I. They may be derived from the general relation
\begin{equation}
\label{gen_eq}
 \exn Af (Z(\infty))=0,
\end{equation}
where $A$ is the infinitesimal generator of the process, $f$ a function from a suitable subset of the domain of~$A$, and, as before, $Z(\infty)\sim \pi$. It may be easier, however, to obtain them via
a direct argument, making use of our Theorem~\ref{Thm_den} (of which the conditions will be assumed satisfied in this section unless we explicitly state otherwise).

To show how to do that, we
will first consider the simple case of a network with one external source and one neuron (with feedback).
Suppose that the neuron has an absolute refractory period (so that the state space is actually finite-dimensional). For simplicity, we assume throughout this section that $\Theta=1$ (which clearly does not restrict generality).

In this case, the state space of the process  is just $E\times E,$ so that each state
$(\hat \bx;\bx)= (\hat x_1, \hat x_2, \ldots, \hat x_m, 0,0,\ldots;   x_1,   x_2,
\ldots,   x_n, 0,0,\ldots )$ (note that here we suppress  the unnecessary   first
subscript indicating the number of the external  source or neuron; likewise, $\hat
\rho$ will denote here $\hat\rho_1$ etc.) belongs to one of the components
$E^{(m)}\times E^{(n) },$ $m,n\ge 0.$ The respective density components we will denote
by $\psi_{ m,n}.$

The first of them, $\psi_{ 0,0},$ is just the stationary probability of the silent state, for which we have, for $\delta \searrow 0,$
\begin{align*}
\psi_{ 0,0}
 & =
 \pr (Z(\delta) = (0,0))
  \\
 & = \pr (Z(\delta) = (0,0) \,|\, Z(0) = (0,0)) \pr (Z(0) = (0,0))
  \\
 & \quad + \int_{0}^\delta \pr (Z(\delta) = (0,0) \,|\, Z(0) = (y,0))\psi_{1,0} (y) dy
  \\
   & \quad + \int_{0}^\delta  \pr (Z(\delta) = (0,0) \,|\, Z(0) = (0,y))\psi_{0,1} (y) dy +
   O(\delta^2)
   \\
   & = e^{- (\hat \rho + \varsigma (v))\delta} \psi_{ 0,0}
    + \int_{0}^\delta  (1 + o(1))\psi_{1,0} (y) dy  + \int_{0}^\delta  (1 + o(1))\psi_{0,1} (y)
    dy+ o(\delta),
\end{align*}
where the term $O(\delta^2)$ corresponds to the possibility that $Z(0)\in E^{(m)}\times
E^{(n) }$ with $m+n >1.$ From the above representation we obtain that
\begin{equation}
(\hat \rho + \varsigma (v)) \psi_{ 0,0} = \psi_{1,0} (0) + \psi_{0,1} (0),
 \label{psi_0}
\end{equation}
where, using Theorem~\ref{Thm_den}, we put $\psi_{1,0} (0):= \psi_{1,0} (0+)$,
$\psi_{0,1} (0):= \psi_{0,1} (0+)$.

In the case where $mn>0$, we fix a point  $\bz = (\hat \bx; \bx)$ in the interior of $ E^{(m)}_0\times E^{(n) }_0$ (see~\eqref{E_0}) and set $I_{\sbz} (\delta) := I_{\hat \sbx} (\delta) \times I_{  \sbx} (\delta), $ where
\begin{align*}
I_{\hat \sbx} (\delta) &:= (\hat x_1, \hat x_1 + \delta)
 \times \cdots \times(\hat x_m, \hat x_m + \delta),
 \\
 I_{\sbx} (\delta) &:=  (  x_1,   x_1 + \delta) \times \cdots \times(  x_n,   x_n + \delta)
\end{align*}
and $\delta >0$ is small enough so that  $I_{\sbz} (\delta) \subset  E^{(m)}_0\times E^{(n) }_0$. Using notation
$\bz +\theta$ for shifting all the components of the vector $\bz$ by the same amount  $\theta\in\R$ and, as we did it before, slightly abusing notation by identifying  $E^{(m)} \times E^{(n) } $ with $E^{(m)}_0\times E^{(n) }_0,$ we have
\begin{align*}
\pr (Z(\delta) \in I_{\sbz} (\delta))
  & =  \pr \bigl(Z(\delta) \in I_{\sbz} (\delta)\, | \, Z(0) \in I_{\sbz}(\delta) + \delta\bigr) \,
   \pr (Z(0) \in I_{\sbz}(\delta)+ \delta )
    \\
    &   \quad + \pr \bigl(Z(\delta) \in I_{\sbz} (\delta),\,
     Z(0) \in [(I_{\hat \sbx} (\delta)+\delta)\times (0, \delta)]\times  (I_{\sbx} (\delta)+\delta)\bigr)
     \\
    &   \quad + \pr \bigl(Z(\delta) \in I_{\sbz} (\delta),\,
     Z(0) \in (I_{\hat\sbx} (\delta)+\delta)
     \times [(I_{  \sbx} (\delta)+\delta) \times (0, \delta)]\bigr)
     \\
    &   \quad + O(\delta^{m+n+2}), \qquad \delta \searrow 0,
\end{align*}
where the last term corresponds to the possibility of $Z(0)$ being in a space of dimensionality higher than $n+m+1$. Expressing the probabilities above as integrals of the respective density components and using Theorem~\ref{Thm_den}, we obtain the relation
\begin{align*}
\int_{I_{\tbz}   (\delta)} & \psi_{m,n} (\by)    (\mu_m  \otimes\mu_n ) (d\by)
 \\
  &  = \int_{I_{\tbz} (\delta)} [1 - \delta(\hat \rho + R (\by+\delta ))   ]\psi_{m,n} (\by + \delta)  (\mu_m \otimes\mu_n ) (d\by) \\
   & \quad + (1 + o(1))  \delta^{m+n+1} [ \psi_{m+1,n}((\hat \by,0; \by)) +  \psi_{m,n+1}((\hat \by; \by ,0))]  + O(\delta^{m+n+2}).
\end{align*}
Subtracting from both sides the integral of $\psi_{m,n} (\by + \delta) $ over $I_{\sbz}$ one can then easily verify that the relation implies that the following differential  equation must be satisfied: for $\theta\in (0, 1 - \max\{\hat x_1, x_1\}),$
\begin{align}
\frac{\partial}{\partial \theta}\, \psi_{m,n} (\bz + \theta)
 & = (\hat \rho + R (\bz + \theta))\psi_{m,n} (\bz+ \theta)
 \notag\\
 & -
    \psi_{m+1,n}((\hat \bx+ \theta,0; \bx+ \theta )) -  \psi_{m,n+1}((\hat \bx+ \theta; \bx + \theta ,0)) .
    \label{DE_psi}
\end{align}
Of course, the equation will hold along the whole interval formed by the intersection of $E^{(m)}_0\times E^{(n) }_0$ with the straight line passing through the point $\bz$ and having the directional vector ${\bf e}_{m+n} :=(1,\ldots, 1)\in \R^{m+n}$, the boundary condition at its right point being specified by the rates of transition to $E^{(m)}_0\times E^{(n) }_0$ from the state space components of lower dimensionalities. For example, if $\hat x_1 < x_1$, then the right end point for the interval of validity of \eqref{DE_psi} corresponds to the point where the ray $ \bx + \theta {\bf e}_n,$ $\theta >0$, hits the ``right" face of $E^{(n)}_0$ (the point $ \hat\bx + \theta {\bf e}_m$ still being in the interior of~$E^{(m) }_0$). At that location, the system can only enter the component $E^{(m)}_0\times E^{(n) }_0$ by a jump from
\[
 (\hat \bx + 1 -  x_1; \bx^* + 1 -  x_1)
  \in  E^{(m )}_0\times E^{(n-1)}_0, \quad
  \mbox{where}\quad   \bx^*: = (  x_2,   x_3, \ldots,   x_n),
\]
caused by a new spike fired by the neuron. Using a probabilistic argument similar to the one above, it is easy to see that   the following must hold:
\begin{equation}
\psi_{m,n} (\bz + 1  -   x_1)  = R ( \hat \bx  + 1 -   x_1; \bx^* + 1 -  x_1)  \psi_{m,n-1} (\hat \bx + 1 -   x_1; \bx^* + 1 -   x_1).
\label{BoCon}
\end{equation}
A similar equation will hold in the case where $\hat x_1 > x_1$, but then the coefficient of $\psi_{m-1,n}$ on the right hand side of the respective relation will simply be $\hat \rho$. The case where only one of $m,n$ is zero is treated similarly.

Solving equations of the form \eqref{DE_psi} with boundary conditions \eqref{BoCon}, complemented by \eqref{psi_0} and the condition that $\sum_{m,n}\int \psi_{m,n}d (\mu_m \otimes \mu_n)=1$, is hardly possible except for the simplest cases. One such case
is considered in the following example.

\begin{exam}{\em
Consider the case of a single neuron with feedback and no external sources. Moreover, assume that the firing rate function has the property
\begin{equation}
R(\bx)=0\quad\mbox{ for all }\quad \bx\in E^{(n)},\ n\ge 2,
 \label{R=0}
\end{equation}
so that there cannot be more than two spikes in any given time interval of length $\Theta=1$ (say, due to the length of the absolutely refractory period exceeding 1/2). Thus the state space of the system is just $E^{(0)}\times E^{(1)}\times E^{(2)}$ (which we again can and will identify with $E^{(0)}_0\times E^{(1)}_0 \times E^{(2)}_0$), the density components being $\psi_n,$ $n=0,1,2$ (for $n>2$, all $\psi_n\equiv 0$).

Using an obvious notational convention, we see that an analog of \eqref{psi_0} in this  case has the form
\begin{equation}
R(0) \psi_0 = \psi_1 (0),
\label{eq_psi_0}
\end{equation}
while an analog of \eqref{DE_psi} is, in the case $n=1$,
\begin{equation}
\frac{d \psi_1(\theta)}{d \theta}
   = R(\theta) \psi_1 (\theta) - \psi_2 (\theta,0), \quad \theta \in (0,1),
\label{eq_psi_1}
\end{equation}
with the boundary condition (an analog of \eqref{BoCon})
\begin{equation}
\psi_1 (1) = R(0)\psi_0.
\label{BoCon_psi_1}
\end{equation}
When $n=2$, an analog of \eqref{DE_psi}     has the following  form: for any $y\in (0,1),$
\[
\frac{\partial \psi_2 (y+\theta, \theta)}{\partial \theta}
   =  R(y+\theta, \theta) \psi_2 (y+\theta, \theta) - \psi_3 (y+\theta, \theta ,0)\equiv 0,   \quad \theta \in (0,1-y),
\]
the right-hand side of the equation being zero due to \eqref{R=0}, with the boundary condition (again an analog of \eqref{BoCon})
\begin{equation}
\psi_2 (1,1-y) = R(1-y)\psi_1 (1-y).
\label{psi_2}
\end{equation}
The last two relations immediately imply that, for any $y\in (0,1),$
\[
\psi_2 (y+\theta, \theta) =   R(1-y)\psi_1 (1-y), \quad \theta \in (0,1-y).
\]
Therefore  $\psi_2 ( \theta, 0) =   R(1-\theta)\psi_1 (1-\theta)$, so that \eqref{eq_psi_1} becomes
\[
\frac{d \psi_1 (\theta)}{d \theta}
   = R(\theta) \psi_1 (\theta) -  R(1-\theta)\psi_1 (1-\theta) , \quad \theta \in (0,1).
  \]
This means that the function $\psi_1$ is symmetric about the point $\theta=1/2$, so that  $\psi_1 (\theta)= \psi_1 (1- \theta),$   $\theta \in (0,1)$ (hence conditions \eqref{eq_psi_0} and \eqref{BoCon_psi_1} are consistent) and the last differential equation can be re-written as
\begin{equation}
\frac{d \psi_1 (\theta)}{d \theta}
   = (R(\theta)  -  R(1-\theta))\psi_1 ( \theta) , \quad \theta \in (0,1).
\label{eq_psi_11}
\end{equation}
Setting $\varphi (\theta) : =\exp\left\{\int_0^\theta (R(y)  -  R(1-y))\,dy \right\},$ we derive from \eqref{eq_psi_0} and \eqref{eq_psi_11}  that
\[
\psi_1 (\theta) = R(0)\psi_0 \varphi (\theta), \quad \theta \in (0,1).
\]
Together with \eqref{psi_2} this completely specifies the density function $\psi$ (computing  $\psi_0$ is trivial).

 }
\end{exam}

\begin{exam}{\em
One can also obtain a closed form solution in the case of a single neuron with feedback and no absolutely refractory period, but under the special assumption that the neuron's  PSP kernel is exponential:  $\epsilon (t) =e^{-\alpha t}\mathbf{1} (t \ge 0) $   for some $\alpha >0$. Of course, this   violates the basic assumption  that $\epsilon$ has a finite support and is smooth, but such a simplified mathematical model could still provide a useful approximation to more realistic ones.

It is not hard to see that in this case the dynamics of the system can be described by a univariate Markov process $Y(t):= \sum_n \epsilon(t-T_n) >0.$  Assuming without loss of generality that $\alpha=1$, one can see that the process $Y$ is driven by the Ornstein-Uhlenbeck type equation
\begin{equation*}\label{processY}
dY(t)=-Y(t)dt + dZ(t), \qquad t > 0,
\end{equation*}
$Z(t)$ being  a pure jump process with unit jumps and instantaneous jump rate $\gamma (Y(t))$, where $\gamma (y):= \varsigma (v+ Wy)$ and $W$ is the weight of the ``self-connection'' of our neuron. The infinitesimal generator $A$ of the process $Y$ is clearly
\begin{equation}\label{gene}
A f(x) = -xf'(x)+\gamma(x)(f(x+1)-f(x)), \qquad x>0;
\end{equation}
its domain's description can be found e.g.\ in Proposition~4 in~\cite{HarrR76}. It is not hard to see that $Y$ is ergodic (see e.g.~\cite{BorovLast08}) and so has a unique stationary distribution that we will again denote by~$\pi$.  Substituting \eqref{gene} into \eqref{gen_eq} (with $Z(\infty)$ replaced by $Y(\infty)$) yields
\begin{equation}
 \label{func_eqb}
\int_0^\infty yf'(y)\pi(dy) = \int_0^\infty \gamma(x)\left(\int_x^{x+1}f'(y)dy\right)\pi(dx) .
\end{equation}
Routine calculation now leads to
\[
\int_0^\infty yf'(y)\pi(dy) = \int_0^\infty f'(y)b(y)dy,\quad
b(y):=\int_{( y-1)^+}^y \gamma(x)\pi(dx),
\]
where we used notation $x^+:= \max\{ x, 0\}$ for the positive part of~$x$. As this equation holds for a large enough class of functions $f$ (see e.g.~\cite{HarrR76}) and $b$ is continuous and locally bounded, we conclude that $\pi$ has a locally bounded and continuous density $\psi$ on $(0,\infty)$, satisfying
\begin{equation}\label{stat_distrib}
y\psi(y)= \int_{( y-1)^+ }^y \gamma(x)\psi(x) dx, \quad  y>0.
\end{equation}
This equation can be solved recursively, on intervals  $J_n:=(n,n+1)$,  $n\geqslant 0$. Straightforward calculations show that the stationary density is given by
\[
\psi (y) := \varphi_n (y), \quad y\in J_n, \quad n=0,1,2,\ldots,
\]
where the $\varphi_n$ are found recursively as
\begin{align*}
\varphi_0(y)&= \psi(1)\exp\left(\int_1^y \dfrac{\gamma(x)-1}{x}\, dx\right), \qquad y\in J_0,\\
\varphi_n(y)&=\Phi_n[\varphi_{n-1}](y), \qquad y\in J_n,\quad n\ge  1,
\end{align*}
where, for $f$ defined on $J_{n-1}$ with a finite $f(n-)$, we denote by $\Phi_n[f](y),$ $y\in J_n,$  the solution $\phi$ of the problem
\[
\phi'(y)=\dfrac{\gamma(y)-1}{y}\phi(y)-\dfrac{\gamma(y-1)}{y}f(y),
 \quad y\in J_n, \quad \phi(n)=f(n-).
\]
The only unknown constant $\psi(1)$ is just the normalizing factor that is  to be determined from $\int_0^\infty \psi(y)dy=1$. At the integer points the density $\psi $ can be defined by continuity.

 }
\end{exam}

In the general case, one can only hope to compute approximations to the stationary distribution of the network. One way to do that is to discretise the state space and approximate the differential equations for the density components $\psi_{\sbm,\sbn}$ discussed at the beginning of this section with respective difference equations, and then to solve the latter. However, although the existence of solution to the original system of differential equation follows from Theorem~\ref{Thm_den}, establishing its uniqueness and also the convergence of the solutions to the systems of approximating difference equations presents a challenge. We will follow an alternative approach by first approximating  the original stochastic process with a sequence of finite ergodic Markov chains in discrete time, and then proving convergence of their stationary distributions to the desired stationary distribution of~$Z$. In the rest of the section, we will be dealing with our Model~I, but one can easily see that analogous results hold for Model~II as well. The only reason why we restrict ourselves to Model~I here is that the formulation of results for the more general model is much more cumbersome.

For $q\in \N$, set $h=h(q):=1/q$ and denote by $\kk{E}$ the collection of all vectors  $\bxi=(\xi_1,\xi_2,\ldots,\xi_q)\in (h\Z)^q,$ such that $1\ge \xi_1 > \xi_2 >\cdots > \xi_n >0$ and $\xi_{n+1}= \xi_{n+1}=\cdots = \xi_{q}=0$ for some $ n\in \{0,1,\ldots,q\}$. In particular, the null vector and $(1, 1-h, 1-2h,\ldots, h)$ both belong to $\kk{E}$. Let
\[
\kk{S}:= \kk{E}^{M+N}
\]
and denote by $\kk{F}$ the ``natural embedding" $ \kk{S} \mapsto S$ under which the components of the vector $\bzeta := (\hat\bxi_1, \ldots, \hat\bxi_M;  \bxi_1, \ldots,  \bxi_N)\in \kk{S}$   are concatenated with infinite strings of zeros  so that, say, $ \bxi_i=(\xi_{i,1}, \ldots, \xi_{i,q}),$ with the last positive component being $\xi_{i,n_i},$ $n_i\le q,$ becomes $  (\xi_{i,1}, \ldots, \xi_{i,q}, 0, 0, \ldots)\in E^{(n_i)}_0$, and by $\kk{F}^{-1}(B)$ the preimage of $B\in\mathscr{S}$ under the mapping~$\kk{F}$.
Finally, for $\bxi=(\xi_1,\xi_2,\ldots,\xi_q)\in\kk{E}$, let
\[
\kk{U}\bxi:= (1, (\xi_1 - h)^+, (\xi_2 - h)^+,\ldots, (\xi_{q-1} - h)^+)\in \kk{E}.
\]

Now consider a Markov chain
\[
\kk{Z} (s)
 %
 %
 = (\kk{\widehat X}_1(s),\ldots , \kk{\widehat X}_M(s);
   \kk{X}_1(s),\ldots , \kk{X}_N(s)),\quad s=0,1,2,\ldots,
 \]
in the (finite) state space $\kk S$
with  one-step transition probabilities specified as follows. Given the value   $\kk{Z} (s) = \bzeta  =(\hat\bxi_1, \ldots, \hat\bxi_M; \bxi_1, \ldots, \bxi_N)\in \kk{S},$   one has the following transitions for the components of the vector $\kk{Z}$:
\begin{equation}
 \label{tra_1}
\kk{\widehat X}_k(s+1) = \left\{
 \begin{array}{ll}
  (\hat\bxi_k -h)^+   & \mbox{with probability }\ 1 - h\hat \rho_k,\\
  \kk{U} \hat\bxi_k  & \mbox{with probability }\  h\hat \rho_k,
 \end{array}
\right.
\end{equation}
\begin{equation}
 \label{tra_2}
\kk{X}_i (s+1) = \left\{
 \begin{array}{ll}
  ( \bxi_i-h)^+ & \mbox{with probability }\ 1 - hR_i (  \bzeta),\\
  \kk{U}  \bxi_i  & \mbox{with probability }\  hR_i ( \bzeta),
 \end{array}
\right.
\end{equation}
where the operations of subtracting a scalar and taking positive parts are understood in the component-wise sense, and all the transitions occur
independently of each other for $k\le M,$ $i\le N.$  The transitions presented as the second options on the right-had sides of the above relations correspond to spike firing by the respective sources and/or neurons in the original model, and we will keep referring to these events as spikes in the case of the discrete model as well.

The next theorem provides a way for numerical calculation of the stationary distribution $\pi$ of our original process $Z.$ Endow $S$ with the topology of component-wise convergence and introduce the following notation. For $\bxi=(\xi_1,\xi_2,\ldots,\xi_q)\in\kk{E}$, denote by
\begin{align*}
V_0\bxi :=
 \left\{
 \begin{array}{ll}
 (\xi_1+h, \xi_2+h, \ldots,  \xi_n+h, 0, 0 ,\ldots, 0)\in\kk{E} & \mbox{if $\xi_1<1$,}
 \\
  (\xi_2+h, \xi_3+h, \ldots, \xi_n+h, 0 ,0, \ldots, 0)\in\kk{E} & \mbox{if $\xi_1=1$,}
 \end{array}
 \right.
\end{align*}
and
\begin{align*}
V_1\bxi :=
 \left\{
 \begin{array}{ll}
 (\xi_1+h, \xi_2+h, \ldots,  \xi_n+h, h, 0 ,\ldots, 0)\in\kk{E} & \mbox{if $\xi_1<1$,}
 \\
  (\xi_2+h, \xi_3+h, \ldots, \xi_n+h, h ,0, \ldots, 0)\in\kk{E} & \mbox{if $\xi_1=1$,}
 \end{array}
 \right.
\end{align*}
possible immediate ``precursors" for the state $\bxi$ of a given component of the Markov chain~$\kk{Z}$, i.e.\ the results of ``inverting" transitions in~\eqref{tra_1} and~\eqref{tra_2}.  It is not hard to see that the states
\[
V_{\hat\sbal, \sbal}  (\bzeta)
 := (V_{\hat\alpha_1} (\hat\bxi_1),\ldots, V_{\hat\alpha_M} (\hat\bxi_M);
 V_{ \alpha_1} ( \bxi_1),\ldots, V_{ \alpha_N} ( \bxi_N))
  \in \kk{S},
\]
where $\hat\balpha =(\hat\alpha_1, \ldots,\hat\alpha_M  ) \in \{0,1\}^M$ and  $\balpha=(\alpha_1, \ldots, \alpha_N  )\in \{0,1\}^N,$ exhaust  all possible precursors of the state~$\bzeta := (\hat\bxi_1, \ldots, \hat\bxi_M;  \bxi_1, \ldots,  \bxi_N)\in \kk{S}$ of our Markov chain, and that
\begin{align*}
p( \bzeta | V_{\hat\sbal, \sbal}  (\bzeta) )
   :&=  \Biggl[\prod_{k\le M} (h\hat\rho_k)^{\mathbf{1} (\hat\xi_k=1)}
  (1-h\hat\rho_k)^{  \mathbf{1} (\hat\xi_k<1)}\Biggr]
   \\
  & \times \Biggl[\prod_{i\le N} (hR_i(V_{\hat\sbal, \sbal}  (\bzeta)))^{\mathbf{1} ( \xi_i=1)}
  (1-hR_i(V_{\hat\sbal, \sbal}  (\bzeta)))^{  \mathbf{1} (  \xi_i<1)}\Biggr]
\end{align*}
are transition probabilities from those states to~$\bzeta$.

\begin{thm}
 \label{Thm_disc}
For any $q\in\N,$ the Markov chain $\{\kk{Z}(s) \}_{s\ge 0}$ is ergodic with stationary distribution $\kk \pi=\{\kk \pi (\bzeta),\  \bzeta\in\kk{S}\}$ satisfying the following system of linear algebraic equations:
\[
\kk \pi (\bzeta) = \sum_{(\hat\sbal; \sbal)\in \{0,1\}^{M+N}}
 \kk \pi (V_{\hat\sbal, \sbal}  (\bzeta)) p( \bzeta | V_{\hat\sbal, \sbal}  (\bzeta) ), \quad \bzeta\in \kk{S};
  \quad \sum_{\sbzeta\in\kk{S}} \kk{\pi} (\bzeta)=1.
\]
Moreover, as $q\to\infty$, the distributions $\kk{\pi}\circ\kk{F}^{-1}$ converge  weakly to the stationary distribution $\pi$ of~$Z$ .
\end{thm}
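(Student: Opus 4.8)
I would dispose of the first assertion immediately. The chain $\kk Z$ lives on the finite set $\kk S$, and the decay probabilities $1-h\hat\rho_k$ and $1-hR_i(\cdot)$ are strictly positive, so from any state the null state $(\bo;\ldots;\bo)$ is reached by at most $q$ consecutive purely decaying steps, while the null state carries a self-loop. Hence $\kk Z$ has a single aperiodic recurrent class, and therefore a unique stationary law $\kk\pi$ to which it converges. The displayed linear system is simply the balance identity $\kk\pi=\kk\pi P$ for the one-step transition operator $P$: for each target $\bzeta$ one lists its precursors $V_{\hat\sbal,\sbal}(\bzeta)$, $(\hat\sbal;\sbal)\in\{0,1\}^{M+N}$, together with the transition probabilities $p(\bzeta\,|\,V_{\hat\sbal,\sbal}(\bzeta))$ computed above, and appends the normalisation $\sum_{\sbzeta\in\kk S}\kk\pi(\bzeta)=1$.

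For the weak convergence the plan is to use the tightness-plus-identification scheme. Tightness first: since each simplex $E^{(n)}$ sits inside the compact cube $[0,1]^n$, a subset of $S$ is relatively compact in the component-wise topology as soon as the number of nonzero coordinates in each component is bounded, so it suffices to bound, uniformly in $q$, the stationary mass on states carrying at least $n$ spikes in some component. Under $\kk\pi$ the firing indicator of neuron $i$ at a step is Bernoulli with parameter $hR_i\le h\overline\varsigma_i$, so by a thinning coupling the number of its spikes visible in a window of $q$ steps is stochastically dominated by a $\mathrm{Bin}(q,h\overline\varsigma_i)$ variable, whence
\[
\kk\pi\bigl(\kk X_i\in E^{(m)},\ m\ge n\bigr)\le\sum_{m\ge n}\binom{q}{m}\Bigl(\tfrac{\overline\varsigma_i}{q}\Bigr)^m\le\sum_{m\ge n}\frac{\overline\varsigma_i^{\,m}}{m!},
\]
a bound independent of $q$ that tends to $0$ as $n\to\infty$ (the sources are handled identically with $\hat\rho_k$ in place of $\overline\varsigma_i$). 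This yields tightness of the family $\{\kk\pi\circ\kk F^{-1}\}_q$.

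Next I would identify every subsequential weak limit $\nu$. By Theorem~\ref{Thm_erg} the process $Z$ has a \emph{unique} stationary law $\pi$, so it is enough to check that $\nu$ is stationary for $Z$, i.e.\ that $\int_S Af\,d\nu=0$ for $f$ in a core of the generator $A$ of $Z$, which I would take to consist of bounded functions that are $C^1$ along the unit decay field, depend on finitely many coordinates, and vanish on all but finitely many simplices. With $h=1/q$ and discrete generator $\kk A:=(P-I)/h$, stationarity of $\kk\pi$ reads $\int_{\kk S}\kk A(f\circ\kk F)\,d\kk\pi=0$. Expanding the transitions~\eqref{tra_1}–\eqref{tra_2}—all coordinates merely decay with probability $1-h(\Sigma_{\hat\rho}+\sum_iR_i)+O(h^2)$, a single source or neuron spike occurs with probability $h\hat\rho_k$ resp.\ $hR_i$, and multiple spikes cost $O(h^2)$—gives
\[
\kk A(f\circ\kk F)(\bzeta)=Af\bigl(\kk F(\bzeta)\bigr)+O(h)
\]
uniformly in $\bzeta$, the transport part emerging as $\lim_{h\to0}h^{-1}[f(\Psi_h\bz)-f(\bz)]$ (the decay flow $\Psi_h$ subtracting $h$ from each positive coordinate) and the jump part from matching $\kk U$ to the continuous spike map up to $O(h)$. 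Since $Af$ is then bounded and continuous on $S$, weak convergence along the subsequence yields
\[
0=\int_{\kk S}\kk A(f\circ\kk F)\,d\kk\pi=\int_S Af\,d(\kk\pi\circ\kk F^{-1})+O(h)\longrightarrow\int_S Af\,d\nu,
\]
so $\nu$ is stationary and hence $\nu=\pi$. As every subsequential limit equals $\pi$ and the family is tight, $\kk\pi\circ\kk F^{-1}\Rightarrow\pi$.

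The hard part will be the uniformity of the generator approximation $\kk A(f\circ\kk F)=Af\circ\kk F+O(h)$ across the entire hierarchy of simplices, and in particular its behaviour on the faces where a coordinate reaches $0$ and a component drops to a lower-dimensional simplex: there one must verify that the discrete decay step and the continuous flow $\Psi_h$ change the number of active coordinates in the same way, and that the $O(h)$ mismatch between the discrete firing map $\kk U$—which couples a spike with one decay step and resets the fresh coordinate exactly to $1$—and the continuous spike map is genuinely uniform. Securing enough smoothness of the core functions $f$ along the flow \emph{through} these faces (so that $Af$ is bounded and continuous and the error is $O(h)$ uniformly in $\bzeta$) is where the real work lies; the tightness estimate, by contrast, follows routinely from the uniform Poisson-type tail bound above.
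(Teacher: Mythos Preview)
Your treatment of ergodicity and of the balance system matches the paper. For the weak convergence, however, you take a genuinely different route. The paper does \emph{not} argue via tightness plus identification of subsequential limits through $\int Af\,d\nu=0$. Instead it first invokes Theorem~\ref{Thm_appr}---whose bound is independent of~$q$ and carries over verbatim to the discrete chain---to replace $Z$ and $\kk{Z}$ by their truncations $Z^{\langle n\rangle}$ and $\kk{Z}^{\langle n\rangle}$, which live in a fixed finite-dimensional, locally compact~$\R^K$; on that space it proves semigroup convergence $\kk{T}^{\lfloor qt\rfloor}f\to T(t)f$ by reading the left side as an integral sum for the explicit transition density~\eqref{dens} and then applies the Ethier--Kurtz criterion for process-level weak convergence; finally the uniform-in-$q$ geometric bound~\eqref{prBound_1} supplies a finite time $t_\varepsilon$ at which both finite-time laws are already $\varepsilon$-close to their stationary laws, and one passes to the limit at that fixed~$t_\varepsilon$. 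Your tightness estimate encodes the same information as the truncation step, packaged as a direct binomial tail rather than the coupling behind Theorem~\ref{Thm_appr}. What the paper's route buys is that after truncation the state space is locally compact (the paper says explicitly that this was the purpose of truncating), so the Ethier--Kurtz machinery applies cleanly and one never has to control the generator approximation across the infinite face hierarchy or to justify the Echeverria-type implication ``$\int Af\,d\nu=0$ on a core $\Rightarrow$ $\nu$ stationary'' on a non-locally-compact space. What your route buys is conceptual directness: one tail bound and one generator computation, with no detour through process-level convergence or the ergodicity time~$t_\varepsilon$. The difficulty you single out as ``the hard part''---uniformity of the generator approximation through the faces, together with the sufficiency of $\int Af\,d\nu=0$---is precisely what the paper sidesteps by truncating first.
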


\begin{proof} That the chain $\kk{Z}$ is ergodic is obvious since it is finite, irreducible and aperiodic. The system of equations that  $\kk{\pi}$ is claimed to satisfy is just an explicit form of the usual matrix equation $\kk{\pi}=\kk{\pi}\, \kk{P}$ for stationary probabilities, $\kk{P}$ being the transition probabilities matrix of our chain. So we only need to prove the last claim of the theorem.

Recall that we used $Z^{\langle n \rangle}$ to denote a ``truncated version" of the process $Z$, of which the components $X_i$ cannot take  values  in spaces of dimensionality higher than~$n$ (see Section~\ref{appra}). Here we will use the same notation for a similarly ``truncated" versions where the components $\widehat X_k$ are likewise constrained. It is easy to see that the assertions of Theorem~\ref{Thm_appr} remains true in this case as well (with a different value for~$C$).

Denote by $\kk{Z}^{\langle n \rangle}$ a similarly truncated version of the chain $\kk{Z}$ and observe that a complete analog of Theorem~\ref{Thm_appr},  with the same bound as in~\eqref{appr} (of which the right-hand side does not depend on $q$), will hold true for that process as well.

Now fix an arbitrary $\varepsilon >0$ and choose $n$ so large that the right-hand side of~\eqref{appr} is less than~$\varepsilon$. That means that the stationary distributions $\pi$ and $\pi^{\langle n \rangle}$ of the processes $Z$ and $Z^{\langle n \rangle}$, respectively, will be $\varepsilon$-close in total variation, and the same will apply to the stationary distributions $\kk{\pi}$ and $\kk{\pi}^{\langle n \rangle}$ of the processes $\kk{Z}$ and $\kk{Z}^{\langle n \rangle},$ too, so that
\begin{equation}
  \sup_{q>0} \biggl[ \sup_{B\in \mathscr{S}  }
    \bigl| \pi ( B )-\pi^{\langle n\rangle}(B) \bigr|
    + \sup_{B\subset\kk{S} }| \kk{\pi}( B ) -\kk{\pi}^{\langle n\rangle}(B) \bigr|\biggr] <2\varepsilon.
 \label{pipi}
\end{equation}

This observation implies that it suffices to prove the claim of Theorem~\ref{Thm_disc} for the truncated processes $Z^{\langle n \rangle}$ and $\kk{Z}^{\langle n \rangle}$  that take values in the finite-dimensional space~$\R^K$, $K:= n(M+N),$. To simplify notation,   we will suppress the superscript ${\langle n \rangle}$ in the next two paragraphs, so that $Z$ will mean there $Z^{\langle n \rangle}$ etc.

To prove convergence of the stationary distributions, first assume that $Z  (0) =\kk{Z} (0)={\bf 0}\in\R^K$ and then observe that, as $q\to\infty,$  the distributions of the processes $\{\kk{Z} (\lfloor qt \rfloor)\}_{t\ge 0}$  weakly converge to that of $\{Z(t)\}_{t\ge 0}$ in the Skorokhod space  $D_{\R^K} [0,\infty)$ (see e.g.\ Section~5 in Chapter~2 in~\cite{EtKu}; by $\lfloor x \rfloor$ we denote the integral part of~$x$).  This can be seen, for instance, from Theorem~2.6 in Chapter~4 in~\cite{EtKu} (in fact, the purpose of the ``truncation" that we did above as the first step in the proof  was to make the state space of the processes locally compact, which is one of the conditions of the theorem). Indeed, extend the domain of the transition operator $\kk{T}$ of the chain $\kk{Z}$ to all bounded measurable functions $f$ defined on the state space of $Z$ ($=Z^{\langle n \rangle} \in \R^K$) by setting
\[
\kk{T} f (\bz):= \exn \bigl[f(\kk{Z} (s+1))|\, \kk{Z} (s) =h\lfloor q \bz\rfloor \bigr],
\]
where $\lfloor   \by\rfloor$ denotes the vector whose components are equal to the integral parts of the respective components of~$\by$, and let
\[
T(t) f(\bz) :=  \exn \bigl[f( Z  (u+t))| Z (u) =  \bz  \bigr] , \quad u,t >0,
\]
be the transition semigroup of~$Z$. Then the conditions of the above-mentioned theorem from~\cite{EtKu} will be met provided that we show that, for any continuous function $f$ on $\R^K$ and any $t>0$, one has
\begin{equation}
\lim_{q\to\infty}\sup_{\sbz} |\kk{T}^{\lfloor q t\rfloor} f (\bz) - T(t) f(\bz) |=0.
 \label{conva}
\end{equation}
Because of the semigroup property, it suffices to prove that convergence  for $t\in [0,1]$ only, which is not hard to do using representations of the form~\eqref{dens}.

Indeed, assume that $t=1$ (recall that we assumed that $\Theta=1$ here); the argument in the case $t<1$ will be similar, but we will need to integrate over subspaces then, which makes everything even more cumbersome. Partition the component $E^{(\sbm; \sbn )}_0$ of the domain of integration of $T(t) f(\bz)$ into cubes of edge length $h$ with vertices on the grid $(h\Z)^{\Sigma_{\mbox{\tiny\boldmath$m$}} + \Sigma_{\mbox{\tiny\boldmath$n$}}}$ (we use here notation from Theorem~\ref{Thm_den} and ignore incomplete cubes, i.e.\ the ones that intersect the ``skew" faces of $E^{(\sbm; \sbn )}_0$, as their contribution to the integrals will be asymptotically negligible as~$q\to \infty$). Fixing one of these cubes, we observe that the probability of the arrival of the   chain $\kk{Z}$ starting at the point $\bx$ to the ``left bottom" vertex of the cube after $\lfloor q t\rfloor$ steps will be given by $h^{\Sigma_{\mbox{\tiny\boldmath$m$}} + \Sigma_{\mbox{\tiny\boldmath$n$}}}$ times a product approximating the quantity $p(\bx, \bz)$ similar to $p(\bv^*, \bz)$ from~\eqref{dens}  (recall that we are dealing with Model~I here, so that we do not need the ``extended" state variable $\bz^*$). Thus $\kk{T}^{\lfloor q t\rfloor} f (\bx)$ will essentially be an integral sum approximating the integral $T(t) f(\bx)$, and as the function $f$ is continuous, it is a simple technical exercise to show that~\eqref{conva} holds true.

The last step in the proof is to observe that a bound of the form~\eqref{prBound_1} will hold uniformly in $q,n\in\N$ for the processes $\{{Z}^{\langle n\rangle} (t) \}_{t\ge 0}$ and $\{\kk{Z}^{\langle n\rangle} (\lfloor qt \rfloor)\}_{t\ge 0}$ as well (resurrecting now the superscripts $\langle n\rangle$). Therefore there exists a $t_\varepsilon<\infty$ such that (recall that we assumed zero initial conditions for all the processes $Z^{\langle n\rangle}$ and $\kk{Z}^{\langle n\rangle}$) one has
\[
  \sup_{q>0} \sup_{B\in \mathscr{B}(\R^K) }
   \Bigl[
   \bigl|\pr (Z^{\langle n\rangle} (t_\varepsilon)\in B )-\pi^{\langle n\rangle}(B) \bigr|
   +
   |\pr (\kk{Z}^{\langle n\rangle}( qt_\varepsilon)\in B )-\kk{\pi}^{\langle n\rangle}(B) \bigr|
   \Bigr]
   <\varepsilon.
  \]
Now the desired assertion follows from~\eqref{pipi} and the weak convergence of the distributions of $\kk{Z}^{\langle n\rangle}( qt_\varepsilon)$ to that of $Z^{\langle n\rangle} (t_\varepsilon)$ as $q\to\infty.$ Theorem~\ref{Thm_disc} is proved.
\end{proof}

\end{document}